\newtheorem{theorem}{Theorem}[section]
\newtheorem{definition}[theorem]{Definition}  
\newtheorem{lemma}[theorem]{Lemma}
\newtheorem{remark}[theorem]{Remark}
\newtheorem{proposition}[theorem]{Proposition}
\newtheorem{cor}[theorem]{Corollary}
\newtheorem{corollary}[theorem]{Corollary}
\newcommand{\supp}{\text{supp}}
\renewcommand{\H}{{\mathcal H}}
\def\C{\mathbb C}
\def\R{\mathbb R}
\def\S{{\mathcal S} }
\def\C{\mathbb C}
\def\R{\mathbb R}
\def\I{\mathbb I}
\def\N{\mathbb N}
\def\al{\alpha}
\def\de{\delta}
\def\rh{\rho}
\def\ga{\gamma}
\def\va{\varepsilon}
\def\LA{\Lambda}
\def\la{\lambda}
\def\va{\varphi}
\def\sp#1#2{\langle{#1},{#2}\rangle}
\def\l{\mathfrak{l}}
\def\ga{\gamma}
\def\la{\lambda}
\def\ve{\varepsilon}
\def\ch{\chi}
\def\et{\eta}
\def\ps{\psi}
\def\N{\mathbb{N}}
\def\R{\mathbb{R}}
\def\C{\mathbb{C}}
\def\ol#1{\overline{#1}}
\def\R{{\mathbb R}}
\def\C{{\mathbb C}}
\def\N{{\mathbb N}}
\def\I{{\mathbb I}}
\def\B{{\mathcal B}}
\def\F{{\mathcal F}}
\def\H{{\mathcal H}}
\def\L{{\mathcal L}}
\def\iy{\infty}
\def\ol#1{\overline{#1}}
\def\no#1{\Vert #1\Vert }
\def\noag#1{\Vert #1\Vert_{\mathrm A(G)} }
\def\CC#1{ C_c(#1)}
\def\supp#1{\text{supp}(#1)}
\def\inv{^{-1}}
\def\l2#1{L^2(#1)}
\def\L1#1{L^1(#1)}
\def\limk{\lim_{k\to\infty}}
\def\nn{\nonumber}
\def\limi{\lim_{i\to\infty}}
\def\sp#1#2{\langle #1,#2\rangle }%skalarprodukt
\def\L#1#2{L^{#1}(#2)}
\def\Re{\mathrm{\, Re\, }}
\def\lef({\left(}
\def\rig){\right)}
\newcommand\nul{\mathop{\rm null}}
\begin{document}

\title[OSpetral Sets]{On the structure of Spectral Sets}
\author[J.Ludwig]{Jean Ludwig}
\address{ Universit\'e de Lorraine\\
Institut Elie Cartan de Lorraine\\
UMR 7502, Metz, F-57045, France.}
\email{jean.ludwig@univ-lorraine.fr.}

\author[l.Turowska]{Lyudmila  Turowska}
\address{ Department of Mathematics, Chalmers University of
Technology and University of Gothenburg \\
 SE-412 96 G\"oteborg, Sweden}
 \email{turowska@chalmers.se}
\thanks{2020 Mathematics Subject Classification: Primary 43A45, Secondary 43A46\\Keywords: locally compact group, Fourier algebra, spectral synthesis, Ditkin set}
\date{}

\begin{abstract} We discuss convergence in the Fourier algebra $A(G)$ of a locally compact group $G$ and obtain a new characterization of local spectral sets of $G$.
\end{abstract}

  \maketitle{}

\section{Introduction}

The notion of  spectral synthesis was introduced by A.Beurling in the late 
1930th. Since then it has been a subject of extensive research in Harmonic Analysis 
primarily in the context of the Fourier algebra $A(G)$ for a locally compact group $G$. 
When $G$ is abelian then $A(G)$ is isometrically isomorphic to $L^1(\widehat 
G)$, the $L^1$-algebra  of the dual group $\widehat G$. If $G$ is 
non-commutative, the Fourier algebra is defined directly on $G$ as the algebra of 
matrix coefficients of the left regular representation of $G$.
The algebra $A(G)$ is a semisimple, regular, commutative Banach algebra with the 
Gelfand spectrum $G$: characters correspond to evaluation at points of $G$.

The question 
of spectral synthesis can be framed as a question  about the ideals of $A(G)$, 
specifically whether a given closed ideal $I\subset A(G)$ is the intersection of all 
maximal ideals containing $I$. This intersection can be 
identified with a closed subset of $G$, known as  the hull of $I$. A closed subset $E$ 
of $G$  is said to be a set of spectral synthesis if the only closed ideal having $E$ as 
its hull is the intersection of all maximal ideals associated with the 
points of $E$.

The first example demonstrating the failure of spectral synthesis was provided by 
L. Schwartz in 1948 for $A(\mathbb R^n)$, $n\geq 3$. That synthesis fails for any 
$A(G)$, where $G$ is a non-discrete locally compact abelian group, was proved by 
P. Malliavin  in 1959. This result was later extended to arbitrary locally compact groups, under mild additional assumption, by E. Kaniuth and A.T.M. Lau, see e.g. \cite[Theorem 6.2.3]{kaniuth-lau}.
To classify sets of spectral synthesis seems to be a problem out of reach for 
the moment. Only some special classes of sets of spectral synthesis  have been 
identified (see, e.g. \cite{graham}).
 An outstanding problem in the field,  as noted in \cite{graham}, 
is  whether the union of two sets of spectral synthesis is itself a 
set of spectral synthesis. The question  was initially posed by C. Herz   in  \cite{H1} for abelian groups and reiterated three  years later  by H. Reiter in  
\cite{Re}. To date, no solution has been found, underscoring the need for new tools to analyze sets of spectral synthesis and advance our understanding of the union problem. 
%We note that the answer to this question is negative for general commutative 
%semisimple regular Banach algebras as it was shown by A. Atzmon in 
%\cite{atzmon}. 

In this paper we present a  characterisation of sets of spectral  synthesis
formulated as a Hilbert space approximation. 
%Our hope is that it may give a new insight into the union problem. 
The paper is organised as follows. In Section 2 we recall definitions and fix 
notations. In Section 3 convergence properties in $A(G)$ are discussed.   Theorem 
\ref{l2l2} and Corollary \ref{ckak} provide  conditions for a sequence to converge in $A(G)$ which maybe of independent  interest.  These results are used to establish a characterization of local spectral sets in Theorem \ref{spectralstrcon}. In Section \ref{abelian} a refinement of  Theorem \ref{spectralstrcon} is presented for  the abelian case. 
The notion of strong spectral sets is introduced in Section \ref{strong}, and it is shown that the union of two such sets remains strongly spectral. This class includes Ditkin sets. 
%and in 
%Section \ref{abelian} we show that in the abelian case every spectral set  is 
%strongly spectral, solving in this way the union problem. 
%In Section 6 we introduce the notion of strongly spectral sets and show that the union of two such sets is again strongly spectral. The class of strongly spectral sets includes the class of Ditkin sets. 
  Finally in Appendix some formulas related to the action of the von Neumann 
algebra of $G$ on the corresponding Fourier algebra are collected.

\section{Preliminaries and Notations}

Let $G$ be a locally compact group with fixed left Haar measure $m$. We denote 
by $C(G)$ the set of continuous complex-valued functions on $G$ and by $C_c(G)$, 
those with compact support in $C(G)$.  For any $1\leq p\leq\infty$ we let 
$L^p(G)$ denote the usual $L^p$ space with respect to $m$ with norm 
$\|\cdot\|_p$.
For any appropriate pair of complex-valued functions $f$, $g$, we denote 
$(f*g)(t)=\int_G f(s)g(s^{-1}t)ds$. We set as
customary $\check{\xi}(s) = \xi(s^{-1})$ and $\tilde \xi(s)=\ol{\xi(s\inv)} $, 
$s\in G$.

The \emph{Fourier algebra}
$A(G)$ of $G$  was defined by P. Eymard in \cite{E}. We recall that $A(G)$  is the 
algebra of coefficients of the left regular representation $\lambda:G\to 
B(L^2(G))$, $(\lambda(s)f)(t)=f(s^{-1}t)$,
that is, the algebra of
functions of the form $s\mapsto \langle\lambda(s)f, g\rangle=\bar g*\check f(s)$, for
$f,g\in L^2(G)$. $A(G)$ is a semi-simple, regular, commutative Banach algebra of continuous 
functions on $G$ with respect to the norm
$$\|u\|_{A(G)}=\text{inf}\{\|f\|_2\|g\|_2:u=\bar g*\check f\}.$$
The Gelfand spectrum of $A(G)$ is known to be the space $G$ itself.

We also recall the duality relation $A(G)^*\simeq VN(G)$, where $VN(G)$ is the 
von Neumann algebra generated by $\lambda(G)$.
The duality is given by the pairing $\langle u,T\rangle=\langle Tf,g\rangle$, where $u\in 
A(G)$,
$u(s) = \langle\lambda(s)f,g\rangle$. We use the same notation, $\langle\cdot,\cdot\rangle$, both  for the inner product on a Hilbert space and for the duality pairing; which one is used should be clear from the context.

If $I\subseteq A(G)$ is an ideal, we define the hull of $I$ to be the set
$$ h(I) = \{s\in G : u(s) = 0 \mbox{ for all } u\in I\}\subseteq G.$$
On the other hand, for a closed set  $E\subseteq G$, define the kernel $k(E) $ 
of $E $ and the minimal ideal $j(E) $ with the hull $E $ by 
\begin{eqnarray*}
 \nn k(E)&:= &
 \{f\in A(G) : f(s)=0, s\in E\},\\ 
 \nn  j(E)
&:= &
\{f\in A(G) : f \text{ has compact support disjoint from }
E\}.
 \end{eqnarray*}

We have that
$h(\overline{j(E)}) =  h(k(E)) = E$ and that if
$I\subseteq A(G)$ is a closed ideal with $h(I) = E$, then
$\overline{j(E)}\subseteq I\subseteq k(E)$.

A closed subset $E\subseteq G$ is said to be {\it spectral}  or  a {\it set of 
spectral synthesis} if $k(E)=\overline{j(E)}$, equivalently
 $k(E)$ is the only closed ideal whose hull is the set $E$;  $E$ is called a 
{\it Ditkin set} if $a\in \overline{aj(E)}$ for any $a\in k(E)$. Clearly any 
Ditkin set is a set of spectral synthesis. The converse is an open problem.

We say that $E$ is  \textit{local spectral} or of \textit{local 
spectral synthesis} if for any
$u\in A(G)\cap C_c(G)$, which  vanishes on $E$, there
exists a sequence $(u_n)_{n\in\N}\subset A(G)$  which converges to
$u$ and such that $u_n$ vanishes on  a neighbourhood of $E$ for
every $n\in \N$.

It is well known that every local spectral subset of $G $ is spectral, provided 
that $A(G) $ has an (unbounded) approximate identity, for instance if $G$ is 
amenable, weakly amenable (\cite{coh}) or, more generally,  if every $u\in A(G) $ 
is contained in the closed ideal it generates.  Many examples of local spectral 
subsets are known. For example it was shown by M. Takesaki and N. Tatsuuma in \cite{takesaki-tatsuuma} for $A(G)$ and by C. Herz in \cite{H2} for the Fig\`a-Talamanca-Herz algebras $A_p(G)$ that 
closed subgroups of any locally compact group are local spectral. 

We finish this section with setting basic notation and 
introducing some notions that will be used subsequently. We write $B(H)$ for the algebra of all bounded operators on a Hilbert space $H$.  For an (unbounded) operator $A$ on $H$, $\mathcal D(A)\subset H$ will stand for the domain  of $A$. For $W\subset G$ we write $W^c$ for its complement. For  a function $f:G\to \C $ we let  $\text{null}( f)=\{x\in G, f(x)=0\} $ and $\text{supp}(f)=\overline{\text{null}(f)^c}$.

\section{A characterization of local spectral sets}
In this section we will present a new characterisation of local spectral sets. 
But first we examine some convergence properties in $A(G)$.

\subsection{Convergence in $A(G)$}\label{conv AG}

We recall first the right and the left  action of $VN(G)$ on $A(G)$:   if $u\in 
A(G)$, $T\in VN(G)$, the elements $u\cdot T$ and $T\cdot u$ in $A(G)$ are defined  through the 
following formulas:
$$\langle S, u\cdot T\rangle:=\langle TS, u\rangle,\quad \langle S, T\cdot u\rangle:=\langle ST, u 
\rangle, \quad \forall S\in VN(G).$$
{ If $u\in A(G)\cap L^2(G)$, we shall also write $T(u)$ for the action 
of $T$ on $u\in L^2(G)$.

Recall that a continuous function $u:G\to\mathbb C$ is called positive definite 
if for each $n\in\mathbb N$ and  $s_1$, $s_2,\ldots, s_n$ in $G$ the matrix 
$(u(s_i^{-1}s_j))_{j,j=1}^n$ is positive definite.

We say that a locally integrable function $\varphi$ on $G$ is a function of 
positive type if 
$$\int \varphi(s)(f\ast \tilde f)(s)ds\geq 0 \text{ for all }f\in C_c(G).$$
If such $\varphi$ is continuous we obtain the previous definition of positive 
definiteness, \cite[13.4.4]{Di2}. Moreover, if $u$ is positive definite, then there exists a unitary representation $\pi$ of $G$ on $H_\pi$ and a vector $\xi\in H_\pi$ such that $u(s)=\langle \pi(s)\xi,\xi\rangle$; we write $c_\xi^\pi$ for the latter matrix coefficient.    
\smallskip 

If $f\in \CC G$ then for any $b\in L^2(G)$, $f\ast b\in L^2(G)$ and we can 
consider
the linear operator $f\mapsto f\ast
b$, $f\in \CC G,$ on $L^2(G)$. If $b$ is of positive type then
$$\langle f\ast b,f\rangle \geq 0$$
see \cite[13.7.6, 13.8.1]{Di2}.  Therefore, the operator $f\mapsto 
r(b)(f):=f\ast b$ 
defined on $\CC G\subset L^2(G)$ is positive. Let $\rho(b)$ denote its 
Friedrichs extension which is a positive selfadjoint operator. By 
\cite[13.8.3]{Di2}, if $h$ is in the domain, $\mathcal D(\rho(b))$, of 
$\rho(b)$, then $\rho(b)h=h\ast 
b$.}

\begin{lemma}\label{sqarer}\marginpar{\tiny }
  Let $b\in C(G)$ be a positive definite square integrable function. Then there 
exists a square integrable function $c$ of positive type such that 
$b=c\ast\tilde c=c\ast c$. Moreover, in this case $\rho(c)f=\rho(b)^{1/2} f$ for 
all $f\in C_c(G)$. 
  \end{lemma}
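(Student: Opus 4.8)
The plan is to recognise this as (essentially) Godement's theorem on square roots of square-integrable positive-definite functions, and to prove it by taking the operator square root of $\rho(b)$ and realising it as a right convolution. Two structural facts drive everything. First, right convolution commutes with left translation: for $f\in C_c(G)$ and $s\in G$ one has $\lambda(s)(f\ast b)=(\lambda(s)f)\ast b$, so the positive self-adjoint operator $\rho(b)$ is affiliated with the commutant $VN(G)'$ and commutes with every left-convolution operator $g\mapsto f\ast g=\lambda(f)g\in VN(G)$. Second, by the spectral theorem $\rho(b)$ has a positive self-adjoint square root $T:=\rho(b)^{1/2}$, again affiliated with $VN(G)'$; and since $VN(G)'$ is closed under the functional calculus, $T$ still commutes with all $\lambda(f)$.

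Next I would construct $c$ concretely. Fix an approximate identity $(\psi_\alpha)\subset C_c(G)$ and set $c:=\lim_\alpha T\psi_\alpha$ in $L^2(G)$. To see this limit exists I use $\|T\psi_\alpha-T\psi_\beta\|_2^2=\langle(\psi_\alpha-\psi_\beta)\ast b,\psi_\alpha-\psi_\beta\rangle$ and expand the double integral $\iint\overline{\psi_\gamma(t)}\,\psi_\delta(s)\,b(s^{-1}t)\,ds\,dt$; since $b$ is continuous and the $\psi_\alpha$ concentrate at the identity $e$, each such term tends to $b(e)$, so the four terms cancel and $(T\psi_\alpha)$ is Cauchy, with $\|c\|_2^2=\lim_\alpha\langle\psi_\alpha\ast b,\psi_\alpha\rangle=b(e)<\infty$. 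Now for $f\in C_c(G)$, using that $\lambda(f)$ is bounded and commutes with $T$, $f\ast c=\lim_\alpha \lambda(f)T\psi_\alpha=\lim_\alpha T\lambda(f)\psi_\alpha=\lim_\alpha T(f\ast\psi_\alpha)$; as $f\ast\psi_\alpha\to f$ in $L^2(G)$ and $T(f\ast\psi_\alpha)\to f\ast c$, closedness of $T$ gives $f\in\mathcal D(T)$ and $f\ast c=Tf=\rho(b)^{1/2}f$. This is precisely the \emph{Moreover} assertion $\rho(c)f=\rho(b)^{1/2}f$ on $C_c(G)$, and it immediately yields $\langle f\ast c,f\rangle=\langle\rho(b)^{1/2}f,f\rangle\ge 0$, so that $c$ is of positive type; in particular $\tilde c=c$ in the $L^2$ sense.

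Finally I would establish $b=c\ast c=c\ast\tilde c$. Applying the identity $\rho(b)^{1/2}\psi_\alpha=\psi_\alpha\ast c\to c$ together with $\rho(b)\psi_\alpha=\psi_\alpha\ast b\to b$ and $\rho(b)\psi_\alpha=T(\psi_\alpha\ast c)$, the closedness of $T$ shows $c\in\mathcal D(T)$ with $Tc=b$. One then identifies $Tc$ with the convolution $c\ast c$ by testing against $\varphi\in C_c(G)$: $\langle b,\varphi\rangle=\langle Tc,\varphi\rangle=\langle c,T\varphi\rangle=\langle c,\varphi\ast c\rangle$, which equals $\langle c\ast c,\varphi\rangle$ after the standard convolution-adjoint manipulation and $\tilde c=c$; hence $b=c\ast c=c\ast\tilde c$. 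Conceptually, $c$ being of positive type forces $\rho(c)\ge 0$, and $b=c\ast c$ gives $\rho(c)^2=\rho(b)$, so $\rho(c)=\rho(b)^{1/2}$ also follows from the uniqueness of the positive square root. I expect the main obstacle to be the passage from the abstract spectral operator $T$ to a genuine square-integrable convolution kernel — i.e. the Cauchy estimate with its limit evaluated via the continuity of $b$, and the weak-pairing identification of $Tc$ with $c\ast c$ — whereas the affiliation, commutation and uniqueness steps are routine.
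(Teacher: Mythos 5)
Your argument is correct, but it follows a genuinely different route from the paper's. The paper simply cites \cite[13.8.6]{Di2} for the existence of the square root $c$ and then extracts the identity $\rho(c)f=\sqrt{\rho(b)}f$ from the internal structure of Dixmier's construction: $c$ is the $L^2$-limit of continuous positive definite $c_n$ with $\rho(c_n)=\sqrt{\rho(b)}E_n$, where $E_n=E([0,n])$ are spectral truncations, and one passes to the limit on $C_c(G)$. You instead reprove Godement's theorem from scratch: you build $c$ as the $L^2$-limit of $T\psi_\alpha$ along an approximate identity (the Cauchy estimate coming from continuity of $b$ at $e$), establish $Tf=f\ast c$ on $C_c(G)$ first via commutation of $T=\rho(b)^{1/2}$ with $\lambda(f)$ and closedness of $T$, and only then deduce $b=Tc=c\ast c$ by a weak pairing. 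What your approach buys is self-containedness and the fact that the ``Moreover'' identity becomes the engine rather than a by-product; what the paper's approach buys is brevity, since everything except the limit passage $\rho(c_n)f\to\rho(c)f$ is delegated to Dixmier. Two steps you label routine do deserve a line each: the affiliation of the Friedrichs extension $\rho(b)$ with $VN(G)'$ holds because the form $f\mapsto\langle f\ast b,f\rangle$ is $\lambda(s)$-invariant, so the form domain and hence the extension commute with all left translations; and the inference from $\langle f\ast c,f\rangle\geq 0$ for all $f\in C_c(G)$ to ``$c$ is of positive type'' is the converse of the implication the paper quotes from \cite[13.7.6]{Di2}, which is stated there as an equivalence. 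With those remarks supplied, the proof is complete.
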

  \begin{proof}
 The existence of a square integrable function $c$ of positive type satisfying 
$b=c\ast \tilde c=c \ast c$ follows from \cite[13.8.6]{Di2}. 
 %In particular, one has  one can show that if $b$ is also 
%continuous one can choose  a positive definite function $c\in L^2(G)$ with the 
%property that  $b=c\ast c$ so that  $\rho(c)f=\sqrt{\rho(b)}f$ for any $f\in 
%C_c(G)$.
%Indeed, in \cite[13.8.6]{Di2} a positive definite function $c$ 
It is constructed as 
 the $L^2(G)$-limit of a sequence $(c_n)_{n\in\N} $ of continuous positive 
definite functions,  such that $\rho(c_n)=\sqrt{\rho(b)} E_n$, where 
$\rho(b)=\int_{[0, +\infty]} xdE(x)$ is the spectral decomposition of $\rho(b)$ and $E_n=E([0,n])$, $n\in\mathbb N$.

For $f\in \CC 
G$, one has $\rho(c_n)f=f\ast c_n$ and,  as $c_n\to c$ in $L^2(G)$, it gives
$\rho(c_n)f\to\rho(c)f$, since $\no{\rho(c_n)f-\rho(c)f}_2\leq \no 
f_1\no{c_n-c}_2 $ for any $n $.

On the other hand, $\sqrt{\rho(b)}E_nf\to \sqrt{\rho(b)}f$ for all $f\in 
{\mathcal D}(\sqrt{\rho(b)})$; 
in particular,  we have the convergence for all $f\in\CC G$, 
since $C_c(G)\subset {\mathcal D}(\rh(c))\subset {\mathcal D}(\sqrt {\rh(c)}) $. 
Therefore 
$\rho(c)f=\sqrt{\rho(b)}f$ for $f\in\CC G$.
\end{proof}
We call the function $c$  from the lemma the {\it positive square root} of $b$. 
Note that it is unique as element in $L^2(G)$. 

\smallskip 

As $A(G)$ is the predual of $VN(G)$, each $u\in A(G)$ possesses  a polar 
decomposition: there exists 
a unique pair $(A,p)$, where $p$ is a positive definite function in  $A(G)$ 
such 
that $\|u\|_{A(G)}=\|p\|_{A(G)}$, and $A$ is a partial isometry in $VN(G)$ with 
the 
final space equal to the support, $s(p)$, of $p$ and  such that
$u=A\cdot p$ and $p=A^*\cdot u$  (see \cite{Di1}, 1.4.  Th\'eor\`eme 4). The element $p$ is 
called the {\it absolute value} of $u$.

Note that for $\tilde u$ and  $\check u$ as defined above, we have $\tilde u$,  
$\check u\in A(G)$ if $u\in A(G)$ and $u=\tilde u$ if $u$ 
is positive definite.
We can define a linear involution  $T\mapsto \check T$ on $VN(G)$ by
 \begin{equation*}
 \sp {\check T} u:=\sp T{\check u},\ T\in VN(G),\ u\in A(G).
 \end{equation*}
  and  an  antilinear involution $T\mapsto\bar T$ of $VN(G)$ by
  $$\bar T(f)=\overline{T(\bar f)}, f\in L^2(G)$$
  In the Appendix we collected different equalities involving $\check T$, $\bar 
T$, 
$\check u$ and $\tilde u$.

\begin{proposition}\label{decomp}
Let $u$ be an element of
$A(G)$ such that $u$ and $\tilde u$ are in $L^2(G)$. Let $\tilde
u= A\cdot p$ be the polar decomposition of $\tilde u$. Then $p\in L^2(G)$ and  
$$p=c\ast 
c, p=\check A(u)  \text{ and }u= \bar A(p)= \bar A(c)\ast c,$$ where $c $ is  
the positive square root of $p 
$.
\end{proposition}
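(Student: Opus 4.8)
The plan is to read everything off the polar decomposition $\tilde u = A\cdot p$, $p = A^{*}\cdot\tilde u$ by transporting the left module action of $VN(G)$ on $A(G)$ to the Hilbert space action on $L^{2}(G)$. The bridge is the identity, among those collected in the Appendix, that for every $w\in A(G)\cap L^{2}(G)$ and every $T\in VN(G)$ one has $\check{(T\cdot w)}=T(\check w)$, where on the right $T$ acts on $\check w\in L^{2}(G)$; equivalently $T\cdot w=\check{(T(\check w))}$. (One checks this on the generators $T=\lambda(r)$, where $T\cdot w(s)=w(sr)$ and $T(\check w)(s)=w(s^{-1}r)=\check{(T\cdot w)}(s)$, and extends it by linearity and weak-$*$ density.) I shall also use the two operator relations $\check A=\overline{A^{*}}$ and $\bar A(f)=\overline{A(\bar f)}$, together with $\check{\tilde u}=\bar u$, $\overline{\check p}=\tilde p$, and $\tilde p=p$ (the last because $p$ is positive definite).

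First I would prove $p\in L^{2}(G)$. Applying the bridge identity to $w=\tilde u\in A(G)\cap L^{2}(G)$ and $T=A^{*}$, and using $\check{\tilde u}=\bar u$, gives $p=A^{*}\cdot\tilde u=\check{\bigl(A^{*}(\check{\tilde u})\bigr)}=\check{\bigl(A^{*}(\bar u)\bigr)}$. Since $\bar u\in L^{2}(G)$ and $A^{*}\in B(L^{2}(G))$, the right-hand side lies in $L^{2}(G)$; hence $p\in L^{2}(G)$ and, applying the involution $\check{\ }$ once more, $\check p=A^{*}(\bar u)$.

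Next the two operator formulas. For $p=\check A(u)$ I would combine $\check A=\overline{A^{*}}$ with $\overline{A^{*}}(u)=\overline{A^{*}(\bar u)}$ and the relation $\check p=A^{*}(\bar u)$ just obtained, getting $\check A(u)=\overline{A^{*}(\bar u)}=\overline{\check p}=\tilde p=p$. For $u=\bar A(p)$ I would apply the bridge identity to the decomposition $\tilde u=A\cdot p$ (now legitimate, as $p\in A(G)\cap L^{2}(G)$) to obtain $\bar u=\check{\tilde u}=A(\check p)$; conjugating and using $\overline{A(h)}=\bar A(\bar h)$ with $\bar{\check p}=\tilde p=p$ yields $u=\overline{A(\check p)}=\bar A(\tilde p)=\bar A(p)$.

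Finally the convolution factorizations. Since $p$ is positive definite and now square integrable, Lemma \ref{sqarer} gives $p=c\ast c$ with $c$ its positive square root. Substituting into $u=\bar A(p)$, it remains to move $\bar A$ past one convolution factor, namely $\bar A(c\ast c)=\bar A(c)\ast c$, and I expect this to be the main obstacle. The heuristic is that $\bar A\in VN(G)$, whereas right convolution by $c$, realized through the positive self-adjoint operator $\rho(c)$, is affiliated to the commutant of $VN(G)$, so that $\bar A$ commutes with $\rho(c)$; writing $p=\rho(c)c$ and using that $\bar A$ preserves $\mathcal D(\rho(c))$ would give $\bar A(p)=\bar A\rho(c)c=\rho(c)\bar A(c)=\bar A(c)\ast c$. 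The care lies precisely in the unboundedness of right convolution by an $L^{2}$ function: one must justify the commutation $\bar A\rho(c)\subseteq\rho(c)\bar A$ and the memberships $c,\bar A(c)\in\mathcal D(\rho(c))$. I would do this through the spectral projections $E_{n}$ and the approximants $c_{n}$ of Lemma \ref{sqarer}, for which $\bar A(f\ast c_{n})=\bar A(f)\ast c_{n}$ on $C_{c}(G)$, and then pass to the limit.
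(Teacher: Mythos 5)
Your derivation of $p\in L^2(G)$, of $p=\check A(u)$ and of $u=\bar A(p)$ is sound and runs parallel to the paper's: where the authors use $(T\cdot v)\tilde{\ }=\tilde v\cdot T^*$ together with $T(v)=v\cdot\check T$ (their (\ref{inversion}) and (\ref{l2ag})), you use the equivalent bridge $(T\cdot w)\check{\ }=T(\check w)$; both reduce to the same bookkeeping with $\check T=\overline{T}^*$.

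The gap is in the last step, $\bar A(c\ast c)=\bar A(c)\ast c$, which you yourself flag as the main obstacle and only sketch. As written, the sketch leaves three things unproved: (i) that $\rho(c)$ is affiliated with $VN(G)'$, so that $\bar A\rho(c)\subseteq\rho(c)\bar A$ --- note that Lemma \ref{sqarer} only identifies $\rho(c)$ with $\sqrt{\rho(p)}$ on $C_c(G)$, and since a Friedrichs extension is determined by a form core rather than an operator core, even the identification of the two self-adjoint operators is not free; (ii) that $c$ and $\bar A(c)$ lie in $\mathcal D(\rho(c))$ and that on these vectors the abstract operator is still given by convolution (by \cite[13.8.3]{Di2} membership in the domain is a hypothesis, not a consequence of $c\ast c\in L^2(G)$); (iii) the passage to the limit in $\bar A(c\ast c_n)=\bar A(c)\ast c_n$, which needs (ii) for the strong convergence $\rho(c_n)c\to\rho(c)c$. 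None of this is necessary. The observation you are missing is that $p=c\ast c=c\ast\tilde c$ is itself a matrix coefficient of the left regular representation: in the paper's notation $\gamma_{f,g}(s)=\langle\lambda(s)f,g\rangle=(\bar g\ast\check f)(s)$ one has $p=\gamma_{\bar c,\bar c}$. The module identities (\ref{leri}), (\ref{eyac}) and (\ref{l2ag}) then give, for every $T\in VN(G)$ and with no unbounded operators at all,
\begin{equation*}
T(p)=T\circ p=\gamma_{\bar c,\overline{T}(\bar c)}=\gamma_{\bar c,\overline{T(c)}}=T(c)\ast\tilde c=T(c)\ast c,
\end{equation*}
and applying this with $T=\bar A$ finishes the proof. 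This is exactly how the paper closes the argument, citing (\ref{inversion}), (\ref{l2ag}) and (\ref{leri}) for the chain $u=(A\cdot p)\tilde{\ }=p\cdot A^*=\bar A(p)=\bar A(c)\ast c$. Your route can probably be repaired, but it trades a one-line module computation for a genuine excursion into affiliated operators and domain questions.
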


\begin{proof}
If $\tilde u=A\cdot p$ is the polar decomposition of $\tilde u\in A(G)\cap L^2(G)$
then by equalities (\ref{inversion}) and (\ref{l2ag}) from the Appendix we have
 \begin{equation}\label{podeco}
 p=\tilde p=(A^*\cdot \tilde u)\tilde{}=u\cdot A
=\check A(u)\in L^2(G).
\end{equation}
%where $\check A(u)$ is the action of $\check A$ on $u\in L^2(G)$.

%Since $u\in A(G)\cap L^2(G)$, by \cite[Proposition 3.17]{E}, $p=\check A(u)\in 
%A(G)\cap L^2(G)$.
Hence, by Lemma \ref{sqarer},
\begin{equation}\label{squareroot}
p=c\ast c,
\end{equation}
where $c \in L^2(G)$ is the positive square root of $p $.
 Using  (\ref{inversion}),
 (\ref{l2ag}) and (\ref{leri})  we see that 
 \begin{eqnarray}\label{udecomposed}
 \nn u &= &
 (A\cdot p)\tilde{}=p\cdot  A^*=\bar A(p)=\bar 
A(c)\ast c.
%\nn  &= &
%d\ast c, d=\bar 
%A(c).  
 \end{eqnarray}
 
\end{proof}

Replacing the partial isometry $A\in VN(G)$ by the partial isometry $\bar A\in 
VN(G)$ we call the representation $u=A(c)\ast c= A(c)\ast\tilde c$ from Proposition \ref{decomp} 
the {\it canonical representation} of $u\in A(G)\cap L^2(G)$.
%\begin{remark}\label{fastpetisfetastp}

Define for $p\in\L2G $ the subspace $\l2G_p $ of $\l2G $ by
 \begin{eqnarray*}
 \nn \l2G_p &:= &
 \{ \xi\in\l2G\vert \ \xi\ast p\in\l2G\}.
 \end{eqnarray*}
 Then $C_c(G)\subset \l2G_p $
 and for   every 
$f\in C_c(G)$,  a positive definite function $p=c^\pi_\va \in \L2G $ and $\et\in\l2G $ 
we have (using the Fubini theorem) that
\begin{eqnarray}\label{fastpet}
  \langle f\ast p,\et\rangle  &= &
 \int_G\int_G f(s)\langle \pi(s\inv t)\va,\va\rangle  ds\ol{\et(t)}dt\\
 \nn 
&= &
\int_G f(s)\int_G\langle \ol{\pi(t\inv s)\va,\va\rangle } \ol{\et(t)}dt ds
\\
\nn  
&= &
\int_G f(s) \ol{(\et\ast p)(s)}ds. 
%\\
%\nn&= &
%\langle f,\et\ast  p\rangle .
 \end{eqnarray}

%\end{remark}

\begin{definition}\label{denserho}\
Let $G $ be  a locally compact group. We call $G $ {right positive}, if 
for any positive definite function 
$p$ we have that 
\begin{eqnarray*}
 \nn \langle \et\ast p,\et\rangle  &\geq 0 &
 \text{ for every }\et\in \L2 G_p.
 \end{eqnarray*}
\end{definition}

\begin{lemma}\label{denseright}
For any  locally compact $\sigma$-compact  right positive group $G $ and any positive 
definite  function $p \in\L2G$ the subspace 
\begin{eqnarray*}
 \nn I_p &:= &
 \{ f\ast p+i f\vert f\in C_c(G)\}
 \end{eqnarray*}
is dense in $\L2G $.
 \end{lemma}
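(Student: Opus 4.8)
The plan is to establish density by showing that the orthogonal complement of $I_p$ in $L^2(G)$ reduces to $\{0\}$. So I would fix $\eta\in L^2(G)$ satisfying $\langle f\ast p+if,\eta\rangle=0$ for every $f\in C_c(G)$ and aim to force $\eta=0$. By linearity of the inner product in its first argument this reads
\begin{equation*}
\langle f\ast p,\eta\rangle + i\langle f,\eta\rangle = 0,\qquad f\in C_c(G).
\end{equation*}

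The key tool is formula (\ref{fastpet}). Since every positive definite function is a matrix coefficient, I may write $p=c^\pi_\varphi$, and then (\ref{fastpet}) gives $\langle f\ast p,\eta\rangle=\int_G f(s)\,\overline{(\eta\ast p)(s)}\,ds$, where the convolution $\eta\ast p$ of two $L^2$ functions is a well-defined bounded continuous function. Writing also $i\langle f,\eta\rangle=\int_G f(s)\,\overline{(-i\eta)(s)}\,ds$ (using $\overline{-i\eta}=i\bar\eta$) and adding, the orthogonality condition becomes
\begin{equation*}
\int_G f(s)\,\overline{\bigl((\eta\ast p)(s)-i\eta(s)\bigr)}\,ds = 0,\qquad f\in C_c(G).
\end{equation*}
As $\eta\ast p-i\eta$ is locally integrable and pairs to zero against every element of $C_c(G)$, which is dense in $L^2(G)$, I conclude the pointwise relation $\eta\ast p=i\eta$ almost everywhere.

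Now I would invoke right positivity. The identity just obtained shows in particular that $\eta\ast p=i\eta\in L^2(G)$, so $\eta\in L^2(G)_p$ and Definition \ref{denserho} applies, yielding $\langle \eta\ast p,\eta\rangle\geq 0$. On the other hand $\langle \eta\ast p,\eta\rangle=\langle i\eta,\eta\rangle=i\|\eta\|_2^2$, which is purely imaginary. A nonnegative real number equal to a purely imaginary one must vanish, whence $\|\eta\|_2=0$ and $\eta=0$. Therefore $I_p^{\perp}=\{0\}$ and $I_p$ is dense in $L^2(G)$.

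The step requiring the most care is the passage from the integral identity against all $f\in C_c(G)$ to the almost-everywhere equality $\eta\ast p=i\eta$; here one must check that $\eta\ast p$ is genuinely defined and locally integrable for $\eta$ merely in $L^2(G)$ (true because $\eta\ast p$ is bounded and continuous) and that $C_c(G)$ is dense in $L^2(G)$, where local compactness and $\sigma$-compactness of $G$ guarantee the relevant measurability and density. Once this a.e. identity is in hand, the role of right positivity is simply to supply the contradiction between the sign constraint $\langle\eta\ast p,\eta\rangle\geq 0$ and the purely imaginary value $i\|\eta\|_2^2$; the rest is routine.
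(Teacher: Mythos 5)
Your proposal is correct and follows essentially the same route as the paper: take $\eta\in I_p^{\perp}$, use the Fubini-type identity (\ref{fastpet}) to convert orthogonality into $\int_G f\,\overline{(\eta\ast p - i\eta)}=0$ for all $f\in C_c(G)$, deduce $\eta\ast p=i\eta$ a.e.\ (using $\sigma$-compactness), and then let right positivity clash with the purely imaginary value $i\|\eta\|_2^2$ to force $\eta=0$. The only cosmetic difference is that the paper pairs in the other order, writing $\langle\eta, f\ast p+if\rangle=0$ and invoking $p=\tilde p$ with Fubini directly rather than citing (\ref{fastpet}).
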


\begin{proof}

Let $\et\in I_p^\perp  $. 
Hence, using $p=\tilde p$ and applying the Fubini theorem, we obtain
\begin{eqnarray*}
{\ \nn0} &=&
\langle \et,f\ast p+if\rangle \\
\nn  
&= &
\int_G(\eta\ast p)(s)\overline{f(s)}ds-i\int_G\eta(s)\overline{f(s)}ds\\ %\langle \et\ast p,f\rangle+\langle \et,if\rangle \\
\nn  
&= &
\int_G((\eta\ast p)(s)-i\eta(s))\overline{f(s)}ds.%\langle \et \ast  p-i\et,f\rangle.
 \end{eqnarray*}
This implies that the measurable function $\et \ast  p-i\et $ is 
0 almost everywhere on each compact $K\subset G$. As $G$ is $\sigma$-compact, $\et \ast  p-i\et =0$ almost everywhere on $G$.  Hence $\et\ast  p=i\et $, $\et\ast  p\in \L2G $ and so 
\begin{eqnarray*}
 \nn \langle \et\ast p,\et\rangle  &= &
i \langle \et,\et\rangle.
 \end{eqnarray*}
Since $G$ is right positive it follows that $i\langle \et,\et\rangle \geq 0 $, 
giving  $\et=0 $.
\end{proof}
\begin{proposition}\label{abelianrightpos} 
Any separable, type I, unimodular group is right positive. 
%An abelian locally compact group and a second countable locally compact group 
%of Type I is  right positive.????
 \end{proposition}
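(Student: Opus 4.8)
The plan is to diagonalise right convolution by $p$ through the Plancherel theory available for a separable, type I, unimodular group and to read off positivity fibrewise.

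First I would isolate the genuine difficulty. For $f\in C_c(G)$ the inequality $\langle f\ast p,f\rangle\ge 0$ is exactly the positivity of the operator $r(p)$ recalled before Lemma \ref{sqarer}; equivalently, writing $p=c^\pi_\varphi$ with $p(s)=\langle\pi(s)\varphi,\varphi\rangle$ and combining (\ref{fastpet}) with Fubini one gets, for $f\in C_c(G)$, the identity $\langle f\ast p,f\rangle=\|\pi(\bar f)\varphi\|^2$, where $\pi(\bar f)=\int_G\overline{f(t)}\pi(t)\,dt$. The whole point is to pass from $f\in C_c(G)$ to an arbitrary $\eta\in L^2(G)_p$, for which neither the vector integral $\pi(\bar\eta)\varphi$ nor the Fubini interchange is available, since such $\eta$ need lie neither in $L^1(G)$ nor in the Friedrichs domain. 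In operator terms, $r(p)$ is symmetric and positive on $C_c(G)$ with positive self-adjoint Friedrichs extension $\rho(p)$ satisfying $\rho(p)\eta=\eta\ast p$ on its domain; by \cite[13.8.3]{Di2} one always has $\mathcal D(\rho(p))\subseteq L^2(G)_p$, and the assertion is equivalent to the reverse inclusion $L^2(G)_p\subseteq\mathcal D(\rho(p))$, i.e. to the positivity of the maximal right-convolution operator.

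Next I would invoke the Plancherel theorem for $G$ separable, type I and unimodular: there is a standard measure $\mu$ on $\widehat G$ and a unitary $\mathcal F\colon L^2(G)\to\int_{\widehat G}^{\oplus}\mathcal{HS}(H_\pi)\,d\mu(\pi)$ onto the field of Hilbert--Schmidt operators, with $\mathcal F f=(\pi(f))_\pi$ for $f\in C_c(G)$ and $\|f\|_2^2=\int_{\widehat G}\|\pi(f)\|_{\mathcal{HS}}^2\,d\mu(\pi)$, under which the left and right von Neumann algebras become $\int^\oplus B(H_\pi)\otimes 1\,d\mu$ and $\int^\oplus 1\otimes B(\overline{H_\pi})\,d\mu$ (see \cite{Di2}). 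Since right convolution commutes with left translations, it is affiliated to the right von Neumann algebra, so $\mathcal F$ carries it to a decomposable operator $T_\pi\mapsto T_\pi P_\pi$, where $P_\pi$ is the positive self-adjoint operator on $H_\pi$ obtained by disintegrating the positive form attached to the positive-definite function $p$; positive-definiteness of $p$ is exactly what forces $P_\pi\ge 0$ for $\mu$-almost every $\pi$ (when $p\in L^1(G)$ this is just $P_\pi=\pi(p)=\int_G p(s)\pi(s)\,ds\ge 0$). Granting this, for $\eta\in L^2(G)_p$ I put $\mathcal F\eta=(T_\pi)$; then $\mathcal F(\eta\ast p)=(T_\pi P_\pi)$ is again an $L^2$-field, and
\begin{equation*}
\langle \eta\ast p,\eta\rangle=\int_{\widehat G}\mathrm{tr}\!\left(T_\pi P_\pi T_\pi^{*}\right)d\mu(\pi)=\int_{\widehat G}\mathrm{tr}\!\left(P_\pi^{1/2}T_\pi^{*}T_\pi P_\pi^{1/2}\right)d\mu(\pi)\ge 0,
\end{equation*}
each integrand being nonnegative because $P_\pi\ge 0$. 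In particular the form is real and nonnegative on all of $L^2(G)_p$, which is the required right positivity.

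The hard part is the third paragraph: giving rigorous meaning to the fibre operators $P_\pi$ and to the decomposable operator $T_\pi\mapsto T_\pi P_\pi$ when $p$ is merely bounded and positive-definite, so that in general $p\notin L^1(G)$ and $\pi(p)$ is not given by a convergent integral, and — crucially — checking that the domain of this operator is exactly $L^2(G)_p$, equivalently that $\mathcal F(C_c(G))$ is a core, i.e. that $\rho(p)$ coincides with the maximal right-convolution operator. I expect to handle this by the truncation already used in Lemma \ref{sqarer}: replacing $P_\pi$ by $P_\pi E_\pi([0,n])$, the fibrewise spectral cut-offs assembling to $\rho(p)E_n$, reduces everything to bounded fibres, where $\mathcal F(f\ast p)=(\pi(f)P_\pi)$ and the domain identification are elementary; a dominated-convergence argument in the direct integral then removes the truncation and simultaneously yields the measurability and positivity of the field $(P_\pi)$ and the equality of domains.
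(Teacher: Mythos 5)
Your overall route is the paper's route: pass to the Plancherel decomposition $\F^G:L^2(G)\to L^2(\widehat G,d\mu;S^2(H_\xi))$, use multiplicativity $\F^G(\eta\ast p)(\xi)=\F^G(\eta)(\xi)\F^G(p)(\xi)$, and conclude from
$\langle\eta\ast p,\eta\rangle=\int_{\widehat G}\mathrm{Tr}\bigl(\F^G(\eta)(\xi)\F^G(p)(\xi)\F^G(\eta)(\xi)^*\bigr)d\mu(\xi)$
together with fibrewise positivity of $\F^G(p)(\xi)$. Your final display is exactly the paper's computation.

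The problem is that you identify the definition and positivity of the fibre operators $P_\pi$ as ``the hard part'' and leave it as a plan (affiliation to the right von Neumann algebra, spectral truncations $P_\pi E_\pi([0,n])$, a core argument identifying $\mathcal D(\rho(p))$ with $L^2(G)_p$), none of which is carried out. That is a genuine gap as written --- and it is also a self-inflicted one, because you are solving a harder problem than the one posed. In Definition \ref{denserho} the function $p$ is a \emph{square-integrable} positive definite function (the space $L^2(G)_p$ is only defined for $p\in L^2(G)$, and that is the only case the paper ever uses). Hence $P_\pi:=\F^G(p)(\pi)$ is directly a Hilbert--Schmidt operator given by the Plancherel unitary; no affiliated operators, no truncation, and no identification of $L^2(G)_p$ with $\mathcal D(\rho(p))$ are needed (the equivalence you assert between right positivity and $L^2(G)_p\subseteq\mathcal D(\rho(p))$ is a detour the proof does not require). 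Positivity of $\F^G(p)(\pi)$ is then immediate from Lemma \ref{sqarer}: $p=c\ast\tilde c$ with $c\in L^2(G)$, so $\F^G(p)(\pi)=\F^G(c)(\pi)\F^G(c)(\pi)^*\geq 0$ for $\mu$-almost every $\pi$, and each integrand $\mathrm{Tr}\bigl(T_\pi P_\pi T_\pi^*\bigr)=\|\F^G(c)(\pi)^*T_\pi^*\|_{S^2}^2$ is nonnegative. Replacing your third paragraph by this observation turns your sketch into the paper's proof.
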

\begin{proof} 
%Let $p\in\L2G $ be positive definite. 

%This implies that the    
%Fourier 
%transform $\widehat p \in\L2{\widehat{G}}$ is non-negative almost everywhere. 
%Therefore for any $\et\in\L2G_p $ we have that

%\begin{eqnarray*}\label{}
% \nn \langle \et\ast p,\et\rangle  &= &
%\langle \widehat{\et}\widehat{p},\widehat{\et}\rangle \\
%\nn\nn  
%& & 
%\int_{\widehat{G}}\vert \widehat{\et}(\widehat{x})\vert 
%^2\widehat{p}(\widehat{x})d\widehat{x}\\
%&\geq&
%0.
% \end{eqnarray*}

If $G$ is separable, type I and unimodular, we have a clear Plancherel picture of $G$ as follows. The unitary dual $\widehat{G}$ becomes a standard Borel space and there is a unique Borel measure $\mu$ on $\widehat{G}$ with the following property: for a fixed $\mu$-measurable cross-section $\xi \to \pi^\xi$ from $\widehat{G}$ to concrete irreducible unitary representations acting on $H_\xi$ we have
	$$\langle f_1, f_2\rangle = \int_{\widehat{G}}\text{Tr}(\widehat{f_1}^G(\xi) \widehat{f_2}^G(\xi)^*) d\mu(\xi),\;\; f_1, f_2 \in L^1(G)\cap L^2(G),$$
where
	\begin{equation}\label{eq-group-Fourier-transform}
	\widehat{f}^G(\xi) = \F^{G}(f)(\xi) := \int_G f(g)\pi^\xi(g)dg \in \B(H_\xi),\;\; f\in L^1(G)\cap L^2(G).
	\end{equation}
Thus, the group Fourier transform
	$$\F^G : L^1(G) \to L^\infty(\widehat{G}, d\mu; \B(H_\xi)),\; f\mapsto \F^G(f)$$ with $\F^G(f) = (\F^G(f)(\xi))_{\xi \in \widehat{G}} = (\widehat{f}^G(\xi))_{\xi \in \widehat{G}}$ extends to a unitary
	$$\F^G : L^2(G) \to L^2(\widehat{G}, d\mu; S^2(H_\xi)),\; f\mapsto \F^G(f).$$
Here, $S^2(H)$ is the space of Hilbert-Schmidt operators on a Hilbert space $H$.
If $\eta$, $p\in L^2(G)$ are such that $\eta\ast p\in L^2(G)$, then $\F^G(\eta\ast p)(\xi)=\F^G(\eta)(\xi)\F^G(p)(\xi)$ and 
$$\langle \eta\ast p,\eta\rangle =\int_{\hat G}\text{Tr}(\F^G(\eta)(\xi)\F^G(p)(\xi)\F^G(\eta)(\xi)^*)d\mu(\xi).$$
As $p\in L^2(G)$ is positive definite, by Lemma \ref{sqarer}, $p=c\ast \tilde c$ for square integrable function $c$ of positive type, giving
$\F^G(p)(\xi)=\F^G(c)(\xi)\F^G(c)(\xi)^*$, and hence $\F^G(p)(\xi)\geq 0$ almost everywhere. Therefore, $\langle\eta\ast p,\eta\rangle\geq 0$, giving the statement. 
\end{proof}

\begin{theorem}\label{l2l2}\marginpar{\tiny }
Let $G$ be a right positive, $\sigma$-compact, locally compact group and let
$u\in A(G)$ and $u_k\in A(G)$, $k\in \mathbb N$, be such that $u,\tilde u\in 
L^2(G)$
and $u_k,\tilde u_k\in L^2(G)$ for every $k$. Let $u=A(c)\ast c$ and 
$u_k=A_k(c_k)\ast c_k$, $k\in \mathbb N$,  be the canonical representations of  
$u$ and $u_k\in A(G)$ respectively, and $p=c\ast c$, $p_k=c_k\ast c_k$. 
%an element of
%$A(G)$ and $(u_k)_k$ be  a sequence in $A(G)$ such that $u,\tilde u$
%and $u_k,\tilde u_k\in L^2(G)$ for every $k$.  Let 
%$A_k, k\in\N,$ be a partial isometry in $VN(G)$ and $p_k$ be a positive
%definite continuous function in $L^2(G)$, with positive  square-root $c_k$
%in $L^2(G)$, such that  $u_k=A_k (p_k)=A_k (c_k)\ast
%c_k$, $p_k=A_k^*(u_k)$. Let  $u=A(p)=A(c)\ast
%c$, $p=A^*(u)$ for a partial isometry $A\in VN(G)$ and a
%continuous positive definite function $p\in L^2(G)$ with square
%root $c$. 
Suppose that
\begin{enumerate}
\item $\no{u_k}_{A(G)}\to \no u_{A(G)}$;
\item $\|u_k-u\|_2\to 0$;
\item   $p_k\to p$  weakly in $L^2(G)$.
\end{enumerate}
 Then
 $\|c_k-c\|_2\to 0$ and  $\|A_k(c_k)-A(c)\|_2\to 0$.  In particular, 
$\|p_k-p\|_{A(G)}\to 0$ and $\|u_k-u\|_{A(G)}\to 0$.

 \end{theorem}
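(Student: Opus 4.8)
The plan is to reduce everything to the two $L^2$-statements $\|c_k-c\|_2\to 0$ and $\|A_k(c_k)-A(c)\|_2\to 0$, since the two $A(G)$-statements then follow mechanically from the submultiplicative estimate $\|f\ast g\|_{A(G)}\le\|f\|_2\|g\|_2$: writing $b_k=A_k(c_k)$ and $b=A(c)$,
\[
\|p_k-p\|_{A(G)}\le\|c_k-c\|_2\bigl(\|c_k\|_2+\|c\|_2\bigr),\qquad \|u_k-u\|_{A(G)}\le\|b_k-b\|_2\,\|c_k\|_2+\|b\|_2\,\|c_k-c\|_2 .
\]
The engine behind every limit below is that in the Hilbert space $L^2(G)$ weak convergence together with convergence of the norms forces norm convergence. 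Two normalisations are recorded first. Since $p=c\ast c=c\ast\tilde c$ is positive definite, $\|u\|_{A(G)}=\|p\|_{A(G)}=p(e)=\|c\|_2^2$ (and likewise for each $u_k$), so hypothesis (1) is exactly $\|c_k\|_2\to\|c\|_2$; and from $\|c_k\|_2^2=\|u_k\|_{A(G)}\le\|b_k\|_2\|c_k\|_2\le\|c_k\|_2^2$ we get $\|b_k\|_2=\|c_k\|_2$, hence $\|b_k\|_2\to\|b\|_2=\|c\|_2$.

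First I would prove $p_k\to p$ in $L^2(G)$. By Proposition~\ref{decomp} the functions $u_k$ and $p_k$ are obtained from one another by applying partial isometries of $VN(G)$, namely $p_k=\check A_k(u_k)$ and $u_k=\bar A_k(p_k)$; as the involutions $T\mapsto\check T$ and $T\mapsto\bar T$ are isometric for the $VN(G)$-norm, these operators act as contractions of $L^2(G)$, whence $\|p_k\|_2\le\|u_k\|_2\le\|p_k\|_2$, i.e. $\|p_k\|_2=\|u_k\|_2$ (and $\|p\|_2=\|u\|_2$). By (2), $\|u_k\|_2\to\|u\|_2$, so $\|p_k\|_2\to\|p\|_2$. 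Combined with the weak convergence $p_k\rightharpoonup p$ of hypothesis (3), the Radon--Riesz property gives $p_k\to p$ in $L^2(G)$.

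The heart of the argument is the passage to the square roots, i.e. $c_k\to c$, and this is where the main obstacle lies: all information about $p_k$ is \emph{quadratic} in $c_k$, and a weak limit of square roots can a priori lose mass, so the square-root step must be handled carefully. Because $p_k\to p$ in $L^2$, Young's inequality $\|f\ast(p_k-p)\|_2\le\|f\|_1\|p_k-p\|_2$ shows $\rho(p_k)f=f\ast p_k\to f\ast p=\rho(p)f$ for every $f\in C_c(G)$; since $C_c(G)$ serves as a core, this yields strong resolvent convergence $\rho(p_k)\to\rho(p)$, and hence, the square root being continuous for that topology, $\rho(c_k)=\rho(p_k)^{1/2}\to\rho(p)^{1/2}=\rho(c)$ in the strong resolvent sense. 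Fix $f\in C_c(G)$. On the one hand $\|f\ast c_k\|_2^2=\langle f\ast p_k,f\rangle\to\langle f\ast p,f\rangle=\|f\ast c\|_2^2$; on the other, strong resolvent convergence together with a uniform spectral tail estimate (using $\sup_k\|f\ast p_k\|_2<\infty$) gives $f\ast c_k\rightharpoonup f\ast c$. Radon--Riesz then upgrades this to $f\ast c_k\to f\ast c$ for all $f\in C_c(G)$. Taking $f=e_\alpha$ an approximate identity, a routine diagonal estimate (legitimate since $(c_k)$ is bounded) yields $\langle c_k,\varphi\rangle\to\langle c,\varphi\rangle$ for all $\varphi\in C_c(G)$, i.e. $c_k\rightharpoonup c$; and with $\|c_k\|_2\to\|c\|_2$ Radon--Riesz gives $c_k\to c$ in $L^2(G)$. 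I expect the two delicate points here — that $C_c(G)$ may be used as a common core so that pointwise convergence of $\rho(p_k)$ propagates to strong resolvent convergence, and the continuity of the square root with its accompanying tail estimate — to require the most care.

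Finally I would treat $b_k=A_k(c_k)$, now using $c_k\to c$. For $\psi\in C_c(G)$,
\[
\langle b_k,\psi\ast c\rangle=\langle b_k,\psi\ast c_k\rangle-\langle b_k,\psi\ast(c_k-c)\rangle=\langle u_k,\psi\rangle-\langle b_k,\psi\ast(c_k-c)\rangle,
\]
where the last term is bounded by $\|b_k\|_2\|\psi\|_1\|c_k-c\|_2\to0$, while $\langle u_k,\psi\rangle\to\langle u,\psi\rangle=\langle b,\psi\ast c\rangle$ by (2); hence $\langle b_k,\chi\rangle\to\langle b,\chi\rangle$ for every $\chi$ in $W:=\overline{\operatorname{span}}\{\psi\ast c:\psi\in C_c(G)\}$. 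The subspace $W$ is closed and invariant under left translations, hence under $VN(G)$, and it contains $c$; since $A\in VN(G)$, this forces $b=A(c)\in W$. Taking $\chi=b$ gives $\langle b_k,b\rangle\to\|b\|_2^2$, so that $\|b_k-b\|_2^2=\|b_k\|_2^2-2\operatorname{Re}\langle b_k,b\rangle+\|b\|_2^2\to0$. This establishes $\|A_k(c_k)-A(c)\|_2\to0$, and the two displayed inequalities of the first paragraph then complete the proof.
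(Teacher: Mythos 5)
Your overall strategy is the same as the paper's: show $\|p_k\|_2=\|u_k\|_2$ and use Radon--Riesz to get $p_k\to p$ in $L^2$, pass to the square roots via strong resolvent convergence of $\rho(p_k)$ and a decomposition of $\sqrt{t}$ into bounded continuous pieces, then upgrade weak convergence of $c_k$ and of $A_k(c_k)$ using convergence of norms. The reductions at the start and the end, and the treatment of $b_k=A_k(c_k)$ via the translation-invariant subspace $W=\overline{\operatorname{span}}\{\psi\ast c\}$, are correct; in fact your argument for $b_k\to b$ is cleaner than the paper's, which extracts a weak-$*$ convergent subnet $A_{k_i}\to A_\infty$ in $VN(G)$ and then runs a subsequence argument.

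However, there is one genuine gap, and it sits exactly at the point where the hypothesis that $G$ is right positive must be used --- a hypothesis your proof never invokes. You assert that ``$C_c(G)$ serves as a core'' for the operators $\rho(p_k)$ and $\rho(p)$, so that pointwise convergence $\rho(p_k)f\to\rho(p)f$ on $C_c(G)$ propagates to strong resolvent convergence. But $\rho(p)$ is defined as the \emph{Friedrichs extension} of $f\mapsto f\ast p$ on $C_c(G)$, and for a Friedrichs extension the initial domain is only a \emph{form} core, not in general an operator core: $C_c(G)$ is an operator core for $\rho(p)$ if and only if $f\mapsto f\ast p$ is essentially self-adjoint on $C_c(G)$, i.e.\ if and only if the ranges $(\rho(p)\pm i\I)C_c(G)$ are dense in $L^2(G)$. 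That density is precisely the content of Lemma \ref{denseright}, whose proof requires both $\sigma$-compactness and right positivity: one shows that any $\eta$ orthogonal to $\{f\ast p+if : f\in C_c(G)\}$ satisfies $\eta\ast p=i\eta$, and right positivity forces $i\|\eta\|_2^2\geq 0$, hence $\eta=0$. Without this step the resolvent-convergence argument (and hence the whole square-root step) does not get off the ground. To repair your proof you should insert exactly this density argument before invoking the common-core criterion; the remaining points you flag as delicate (the decomposition $\sqrt t=h(t)+q(t)t$ with $h,q\in C_0([0,\infty))$ and Stone--Weierstrass approximation by polynomials in $(x\pm i)^{-1}$) are carried out in the paper and work as you anticipate.
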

\begin{proof}

We shall use an idea from the  proof of 
\cite[Proposition 5.1]{can_haag} to conclude that $ (c_k)_k$ converges to $c$ in 
$L^2(G)$.

We have that
 \begin{equation*}
  \no {u_k}_2=\no {A_k(p_k)}_2\leq \no {p_k}_2=
  \no {A_k^*(u_k)}_2\leq \no {u_k}_2
  ,\text{ for all }k.
  \end{equation*}

Hence $\no{u_k}_2= \no{p_k}_2$
 for all $k$ and similarly $\no
u_2=\no p_2$. The same is true for the $A(G)$-norms. Since $\|u_k-u\|_2\to 0$, 
we  have the convergence $\no {p_k}_2\to\no 
p_2$.
Together with the weak-convergence of $(p_k)_k$ to $p$, this shows
that $\|p_k-p\|_2\to 0$. 

We are going to show next that
$\rh(c_k)f$ converges to $\rh(c)f$ in $L^2(G)$ for any $f\in\CC
G$.
Since each $\rh(p_k)$ and $\rh(p)$ are self-adjoint
and positive, the operators $\rh(p_k)+i\I$ and   $\rh(p)+i\I$ have bounded 
inverses; here we write $\I$ for the identity operator on $L^2(G)$. Furthermore,
$$\no{(\rh(p_k)+i\I)\inv}_{\text{op}}\leq 1, \forall k \text{ and
} \no{(\rh(p)+i\I)\inv}_{\text{op}}\leq 1.$$ Let $f\in\CC G$ and
$g=
 (\rh(p)+i\I)f\in L^2(G)$. As $p_k\to p$ in $L^2(G)$, we have that
 $$\|\rho(p_k)f-\rho(p)f\|_2=\|f\ast p_k-f\ast p\|_2\leq\|f\|_1\|p_k-p\|_2\to 
0$$
 and
 \begin{equation*}
 [(\rh(p_k)+i\I)\inv-(\rh(p)+i\I)\inv]g=(\rh(p_k)+i\I)\inv(\rh(p)-\rh(p_k))f\to 
0.
 \end{equation*}
  Since the operators
 $(\rh(p_k)+i\I)\inv$
 are uniformly bounded  and since, by Lemma \ref{denseright}, the subspace
 $\{(\rh(p)+i\I)f: { f\in C_c(G)}\}$ is dense in
 $L^2(G)$, it follows that
 \begin{equation}\label{polyn}
 (\rh(p_k)+i\I)\inv{ g}\to (\rh(p)+i\I)\inv { g} \quad 
\text{ for all } { g}\in
 L^2(G).
 \end{equation}
 A similar proof works for $(\rh(p_k)-i\I)\inv$.

 Define now two continuous functions $h$, $q: [0,\iy)\to\R$ by letting 
$$h(t):=\left\{%
\begin{array}{ll}
   \sqrt t-t, & \hbox{ if } t\leq 1\\
    0, & \hbox{ if } t\geq 1\\
\end{array}%
\right.\text {and }
q(t):=\left\{%
\begin{array}{ll}
   1, & \hbox{ if } t\leq 1\\
    \frac 1 {\sqrt t}, & \hbox{ if } t\geq 1\\
\end{array}%
\right.$$

Then  $h$ and $q \in C_0([0,\iy))$, and
 \begin{equation*}
\sqrt t=h(t) + q(t) t,\quad t\geq 0.
 \end{equation*}
%It follows from \cite{RS}, Theorem VIII.20, that

 By the Stone-Weierstrass theorem, the
 polynomials in
$(x+i)\inv$ and
 $(x-i)\inv$ are dense in  $C_{0}([0,\iy))$. Thus, given $\ve>0$,
we can find a polynomial $P(s,t)$  such  that
\begin{equation*}
\left\vert q(x)-P\left(\frac 1 {x+i},\frac 1{x-i}\right)\right\vert<\frac \ve 
3\quad
\text{ for all }x\geq 0.
\end{equation*}

Therefore,
\begin{equation*}
\no{
q(\rh(p_k))-P((\rh(p_k)+i\I)\inv,(\rh(p_k)-i\I)\inv)}_{\text{op}}<\frac
\ve 3, \quad \text{ for all } k,\end{equation*}
 and
 \begin{equation*}
\no{
q(\rh(p))-P((\rh(p)+i\I)\inv,(\rh(p)-i\I)\inv)}_{\text{op}}<\frac
\ve 3.
\end{equation*}
It follows from (\ref{polyn}), that
\begin{equation*}
P((\rh(p_k)+i\I)\inv,(\rh(p_k)-i\I)\inv) f\to
P((\rh(p)+i\I)\inv,(\rh(p)-i\I)\inv) f,\text{ for all } f\in
L^2(G).
 \end{equation*}

Thus for any $f\in L^2(G)$, there  exists an $N_1(f)\in\mathbb N$ such that
\begin{equation}\label{krh}
\no{q(\rh(p_k))f-q(\rh(p))f}_2\leq \ve
\end{equation}
for any  $k\geq
N_1(f)$.

Since $\rh(p_k)f\to\rh(p)f$ for all $f\in \CC G$,
there exists an $N_2(f)\in\mathbb N$ such that
\begin{equation*}
\no{\rh(p_k)f-\rh(p)f}_2\leq \ve, \text{ for all }k\geq N_2(f).
 \end{equation*}

Finally,  for $f\in \CC G$ and  $k\geq \text{max}\{N_1(\rh(p)f), N_2(f)\}$
(since $\no {q(\rho(p_k))}_{\text{op}}\leq \no q_{\iy}=1$ for all
$k$) we have that
\begin{eqnarray*}
  \no{q(\rh(p_k))(\rh(p_k)f)-q(\rh(p))(\rh(p)f)}_2 &\leq&
  \no{q(\rh(p_k))(\rh(p_k)f)-q(\rh(p_k))(\rh(p)f)}_2\\
  &&+\no{q(\rh(p_k))(\rh(p)f)-q(\rh(p))(\rh(p)f)}_2 \\
   &\leq & \no{\rh(p_k)f-\rh(p)f}_2 \\
  &&+\no{q(\rh(p_k))(\rh(p)f)-q(\rh(p))(\rh(p)f)}_2
   \\
   &\leq&\ve+\ve.
\end{eqnarray*}
Similar arguments applied to $h$ instead of $q$ give us that
\begin{equation}\label{conno}
h(\rh(p_k))f\to h(\rh(p))f\quad \text{ for all } f\in C_c(G) .
\end{equation}

Together this shows that
\begin{equation*}
\sqrt{\rh(p_k)}f\to\sqrt{\rh(p)}f, \text{ for all }f\in \CC G.
 \end{equation*}

But by (\ref{squareroot}), $ p_k=c_k\ast c_k$, $p=c\ast c$ and
$\sqrt{\rh(p_k)}f=\rh(c_k)f$, for all $k$, and $\sqrt{\rh(p)}f=\rh(c)f$, 
$f\in\CC G.$
Hence
\begin{equation*} f\ast c_k\to f\ast c, \text{ for all
}f\in \CC G.
\end{equation*}

Therefore, for every $f,g\in \CC G$, it follows that
\begin{equation}\label{convcon}
 \langle c_k,f^*\ast g\rangle=\langle f\ast c_k, g\rangle\to \langle f\ast c, 
g\rangle=
 \langle c,f^*\ast g\rangle.
 \end{equation}
Here $f^*(s)=\bar f(s^{-1})\Delta(s^{-1})$ and $\Delta$ is the modular 
function. 
Since the functions $f^*\ast g,  f,g\in \CC G,$ generate a dense subspace in
$L^2(G)$ and since by assumption
 \begin{equation*}\no
{c_k}^2_2=p_k(e)=\no {p_k}_{A(G)}= \no {u_k}_{A(G)}\to 
\no{u}_{A(G)}=\no{p}_{A(G)}=p(e)=\no
{c}^2_2,
\end{equation*}
it follows from (\ref{convcon}) that $(c_k)_k$ converges 
to $c$ weakly in $L^2(G)$ and finally also in norm, as
\begin{eqnarray*}
\|c_k-c\|_2^2=\|c_k\|_2^2+\|c\|_2^2-2\Re{\langle c,c_k\rangle}|\to 0.
\end{eqnarray*}

The sequence $(A_k)_k $, being uniformly bounded by 1,  admits a weakly 
convergent  subnet 
$(A_{k_i})_i $. Let $A_\infty\in VN(G)$ be its limit.  We are going to show 
that 
$A_\iy(p)=A(p)$ and
$A_\iy^*(u)=A^*(u)$.

Indeed, for $f\in\CC G$, we have that
\begin{eqnarray*}
  \langle u_{k_i}, f\rangle &=&\langle A_{k_i}(p_{k_i}),f\rangle\\
   &=& \langle p_{k_i},A_{k_i}^*(f)\rangle \\
 \downarrow & & \downarrow \quad (\text{since } p_{k_i}\to p, u_{k_i}\to u \in 
L^2(G))\\
   \langle u, f\rangle&=& \langle p,A_\iy^*(f)\rangle=\langle A_\iy(p),f\rangle.
\end{eqnarray*} 

Hence $u=A(p)=A_\iy (p)$. Similarly,
\begin{eqnarray*}
\langle A^*(u),f\rangle&=&\langle p,f\rangle=\lim_k\langle 
p_k,f\rangle=\lim_k\langle  A_k^*(u_k),f\rangle=\lim_{k_i}\langle  
u_{k_i},A_{k_i}(f)\rangle\\&=&\langle u,A_\iy(f)\rangle=\langle 
A_\iy^*(u),f\rangle
\end{eqnarray*}
and hence 
$A^*(u)=A^*_\iy (u).$

Furthermore,  for any $x\in G$,
  \begin{eqnarray*}
    \sp{A_\iy(c)-A(c)}{\la(x)c} &=& ((A_\iy(c)-A(c))\ast c)(x) \\
    &=&(A_\iy(c)\ast c)(x)-(A(c)\ast c)(x)\\
    &=&A_\iy(c\ast c)(x)-A(c\ast c)(x)\\
    &=&A_\iy(p)(x)-A(p)(x)\\
     &=& u(x)-u(x) \\
    &=& 0. \\
  \end{eqnarray*}
This relation tells us that $A_\iy(c)-A(c)$ is  contained in the
  orthogonal complement to $\la(C_c(G))(c)$.
On the other hand, since $A_\iy, A \in VN(G)$,  $A_\iy$, $A$ are strong limits of nets contained in  $\la(C_c(G))$. Consequently,  
$A(c)  $, $A_\iy(c)\in \overline{\la(\CC G)(c)}$ and  
therefore $A_\iy(c)-A(c)=0 $.

Observe next that $A^*A(p)=p$. As $p=c\ast c=c\ast\tilde c$, it follows from (\ref{leri}) and
(\ref{l2ag}) that
\begin{equation}\label{identi}
p(x)=A^*A(p)(x)=(A^*A(c)\ast\tilde c)(x)
\end{equation}
for almost all $x\in G$. Now, since both $p$ and $A^*A(c)\ast\tilde c$ are continuous, we have the equality everywhere on $G$. 

Similarly, $p_k(x)=(A_k^*A_k(c_k)\ast c_k)(x)$ for all $x\in G$, $k\in\mathbb N$.

%Let us recall the following formula. For $u=a\ast \tilde b\in
%A(G)\cap L^2(G)$ and $E\in VN_\la(G)$ we have by (\ref{leri}) and
%(\ref{l2ag}),
 %that
 %\begin{equation*}\label{}
%E(u)=E(a)\ast \tilde b\end{equation*} as elements in $L^2(G)$.
%Hence if $E(u)=u$, we have for almost all $x\in G$, that
%\begin{equation*}\label{}
%u(x)=E(u)(x)=E(a)\ast \tilde b(x)\end{equation*} and since both
% functions $u$ and $E(a)\ast\tilde b $ are continuous, it follows
% that
%\begin{equation}\label{identi}
%u(x)=E(a)\ast \tilde b(x),\quad \text{ for all } x\in G.
%\end{equation}
Hence,
%, since $A_k^*A_k(p_k)=p_k$ for all $k$ and   $A^*A(p)=A_\iy^*A_\iy 
%(p)=p$, 
%it follows from (\ref{identi}) that
\begin{eqnarray*}
 \no{A_k(c_k)}_2^2 &=&\langle A_k(c_k),A_k(c_k)\rangle \\
  &=& \langle A_k^*A_k(c_k),c_k\rangle \\
 &=& (A_k^*A_k(c_k)\ast c_k)(e) \\
   &=& p_k(e)=\no{p_k}_{A(G)}\\
   &\to & \no{p}_{A(G)}=p(e)\text{ (as }k\to\iy)\\
   &=& (A^*A(c)\ast c)(e)=\no{A(c)}^2_2.
\end{eqnarray*}

Therefore, the weakly convergent net $(A_{k_i}(c_{k_i}))_i$ converges in
fact in norm to $A_\iy(c)=A(c) $, from which we can conclude that the convergence holds for the entire sequence $(A_k(c_k))_k$. In fact, otherwise, there exist $\varepsilon>0$ and a subsequence $(A_{k(n)}(c_{k(n)}))_n$ such that $\|A_{k(n)}(c_{k(n)})-A(c)\|>\varepsilon$. Repeating the previous arguments, we find a subsequence of $(A_{k(n)}(c_{k(n)}))_n$ that converges to $A(c)$ in norm, contradicting the choice of  $(A_{k(n)}(c_{k(n)}))_n$.

\end{proof}

\marginpar{\tiny }
\begin{cor}\label{ckak}
 Suppose that  $u, u_k\in A(G)$, $k\in \mathbb N$,   are such that 
$\|u_k-u\|_{A(G)}\to 0$ and $\supp {u_k}\subset K$ for all $k\in \mathbb N$ and 
some compact set $K\subset G$. 

Let $u_k=A_k
(c_k)\ast c_k$  and $u=A(c)\ast c$ be  the canonical representations of $u_k$ and $u$ respectively. 
   Then $\|c_k-c\|_2\to 0$ and $\|A_k(c_k)-A(c)\|_2\to 0$.
 
\end{cor}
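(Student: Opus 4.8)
The plan is to deduce the statement directly from Theorem \ref{l2l2} by verifying its three hypotheses under the present, stronger assumptions; throughout I keep the standing hypotheses of that theorem, so that $G$ is right positive and $\sigma$-compact. First I would record that $u$ is itself compactly supported. Since the inclusion $A(G)\hookrightarrow C_0(G)$ is contractive, $\|u_k-u\|_{A(G)}\to 0$ forces $u_k\to u$ uniformly, and as every $u_k$ vanishes off $K$ so does the uniform limit $u$; hence $\supp{u}\subseteq K$. Consequently $u,\tilde u,u_k,\tilde u_k$ are continuous with compact support, so they lie in $L^2(G)$, the $L^2$-membership hypotheses of Theorem \ref{l2l2} are met, and the canonical representations $u=A(c)\ast c$, $u_k=A_k(c_k)\ast c_k$ together with the positive definite functions $p=c\ast c$, $p_k=c_k\ast c_k$ are available.

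Conditions (1) and (2) are routine. Condition (1), $\|u_k\|_{A(G)}\to\|u\|_{A(G)}$, is immediate from the reverse triangle inequality $\bigl|\,\|u_k\|_{A(G)}-\|u\|_{A(G)}\,\bigr|\le\|u_k-u\|_{A(G)}\to 0$. For condition (2) I would use uniform convergence once more: since $u_k-u$ vanishes off $K$ and $\|u_k-u\|_\infty\le\|u_k-u\|_{A(G)}$, one has $\|u_k-u\|_2\le m(K)^{1/2}\|u_k-u\|_\infty\to 0$.

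The crux is condition (3), the weak $L^2$-convergence $p_k\to p$. I would first note the uniform $L^2$-bound: exactly as in the proof of Theorem \ref{l2l2}, the partial isometry identities $u_k=A_k(p_k)$ and $p_k=A_k^*(u_k)$ give $\|p_k\|_2=\|u_k\|_2$, which by condition (2) converges to $\|u\|_2$, so $(p_k)$ is bounded in $L^2(G)$. It then suffices to identify the weak $L^2$-limit. For each $g\in C_c(G)$ the functional $v\mapsto\int_G v\,\bar g\,dm$ is continuous on $A(G)$ (it equals $\langle v,\lambda(\bar g)\rangle$ with $\lambda(\bar g)\in VN(G)$), so once we know $p_k\to p$ in $A(G)$ we get $\langle p_k,g\rangle_{L^2}=\int_G p_k\bar g\,dm\to\int_G p\bar g\,dm=\langle p,g\rangle_{L^2}$; combined with the uniform $L^2$-bound and the density of $C_c(G)$ in $L^2(G)$, this yields $p_k\to p$ weakly in $L^2(G)$. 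To obtain $p_k\to p$ in $A(G)$ I would invoke the norm-continuity of the absolute value map $v\mapsto|v|$ on the predual $A(G)$: since $u_k\to u$ in $A(G)$ and $p_k=|\tilde u_k|$, $p=|\tilde u|$, this gives $p_k\to p$ in $A(G)$. With (1)--(3) verified, Theorem \ref{l2l2} delivers $\|c_k-c\|_2\to 0$ and $\|A_k(c_k)-A(c)\|_2\to 0$.

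I expect the continuity of the absolute value to be the one non-formal point; everything else is bookkeeping. If one prefers a self-contained route avoiding that quotation, the same conclusion follows by a compactness argument mirroring the end of the proof of Theorem \ref{l2l2}: any weak-$L^2$ cluster point $q$ of the bounded sequence $(p_k)$ can be identified with $p$ by passing to a subnet along which $A_k\to A_\infty$ in the weak operator topology on the (WOT-compact) unit ball of $VN(G)$ and using $u=A_\infty(q)$ together with the uniqueness of the polar decomposition. Showing that \emph{every} cluster point equals $p$ is the main obstacle; once it is established, the bounded sequence $(p_k)$ converges weakly to $p$, condition (3) holds, and the reduction to Theorem \ref{l2l2} is complete.
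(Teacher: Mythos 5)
Your argument is correct and follows essentially the same route as the paper's proof: both reduce to Theorem \ref{l2l2} by checking its three hypotheses, with (1) and (2) obtained from the norm convergence and the common compact support, and with (3) hinging on exactly the same key fact you flag, the norm-continuity of the absolute value map on the predual (the paper cites \cite[III, Proposition 4.10]{ta}), followed by the standard ``uniform $L^2$-bound plus testing against $C_c(G)$'' upgrade to weak $L^2$-convergence. The only cosmetic difference is that the paper passes from $\|p_k-p\|_{A(G)}\to 0$ to weak convergence via uniform convergence on $G$, while you pair with $\lambda(\bar g)\in VN(G)$; these are interchangeable.
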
 

\begin{proof}
As $\|\tilde w\|_{A(G)}=\|w\|_{A(G)}$ for all $w\in A(G)$ (\cite[Remark 2.10]{E}),  we have $\|\tilde u_k-\tilde u\|_{A(G)}\to 0$ and if $p_k$ and $p$ are the absolute values of $\tilde u_k$ and $\tilde u$ respectively, applying \cite[III, Proposition 4.10]{ta}, we obtain $\|p_k-p\|_{A(G)}\to 0$. In
 particular,   $ p_k$ tends to $p$ and $u_k$ to $u$ uniformly on $G$. 
Now, by the assumption, we can find a common compact subset that contains the supports of  $u$, $\tilde u$, $u_k$ and $\tilde u_k$, $k\in\mathbb N$. Therefore, $\|u_k- u\|_2\to 0$. 
As $p_k=A_k^*(u_k)$, we have that the 
sequence $(p_k)_k$ is bounded in $L^2(G)$. The uniform convergence  $p_k\to p$  on $G$, gives $\langle p_k-p,f\rangle\to 0$ for any $f\in C_c(G)$. As $C_c(G)$ is dense in $L_2(G)$ and $(p_k)_k$ is bounded, this implies that $p_k\to p$ weakly in $L^2(G)$.  We have thus verified all the
conditions of Theorem~\ref{l2l2}, which gives us the statement. 

%Hence there exists a sub-sequence of 
%$(p_k)_k$ that converges weakly to some $p_\infty\in L^2(G)$. Since $p_k$ tends
% uniformly to $p$, it follows that $p_\iy=p$. As a consequence, we obtain that 
% $p_k$ converges weakly to $p$. We have thus verified all the
%conditions of Theorem~\ref{l2l2}, which gives us the statement.

 %Let $K$ be a compact subset of $G$ such that $K=K\inv$ contains the supports of 
%all
% our functions
 %$u_k$ and $u$. Since now $u_k$ for each $k$ and $u$ itself are in $\CC K$,
 %so do the
% functions $\tilde u_k$ and $\tilde u$. Furthermore, since
% $u_k$ tends to $u$ in
% $A(G)$ and since  by \cite[Remark 2.10]{E}, for every $w\in A(G) $,  $\|\tilde 
%w\|_{A(G)}=\|w\|_{A(G)}$,  we have  that $\tilde u_k$ tends to $\tilde u$ and 
%the
 %absolute value $p_k$ of $\tilde u_k$ tends in $A(G)$ to the absolute
 %value $p$ of $\tilde u$ by \cite[III, Proposition 4.10]{ta}. In
% particular $u_k$ tends to $u$ and $p_k$ to $p$ uniformly and therefore
 %$u_k$ converges to $u$ in $L^2(G)$. As $p_k=A_k^*(u_k)$, we have that the 
%sequence $(p_k)_k$ is bounded in $L^2(G)$. Hence there exists a sub-sequence of 
%$(p_k)_k$ that converges weakly to some $p_\infty\in L^2(G)$. Since $p_k$ tends
% uniformly to $p$, it follows that $p_\iy=p$. As a consequence
% $p_k$ tends weakly to $p$. We have thus verified all the
%conditions of Theorem~\ref{l2l2}.
\end{proof}

\subsection{A characterization of local spectral sets} \label{characterization}

%\begin{definition}
%\begin{definition}\label{ladef}
\rm  Let $t\in G, \xi\in\L2G $ and $T\subset G$.  For simplicity of notation, write
\begin{eqnarray*}
 \nn t\cdot \xi &:= &
 \la(t)\xi.
 \end{eqnarray*}
and set 
 \begin{eqnarray*}
 \nn \LA( 
T\cdot \xi) &:= &
 \overline{\text{span}(T\cdot \xi)}.
 \end{eqnarray*}

For a closed subspace $ E $ of $ \L2G $,
let $ P_E $ be the orthogonal projection onto $ E $ and write 
%\begin{eqnarray}\label{projection}
  $P_{T\cdot \al}$ for the projection 
  %&:= &
$P_{\LA(T\cdot\al)}$.
We are now ready to prove the  main result. 
\begin{theorem}\label{spectralstrcon}

Let  $G$ be a locally compact,  $\sigma$-compact, right positive group and let $S$ be a closed 
subset
of $G$. Then $S$ is a local spectral set if and only if for any $u\in k(S)\cap 
C_c(G)$ 
there exist 
%a Hilbert space $\mathcal H$,
a representation $u(s)=\langle 
d, \lambda(s) c\rangle$, $c, d\in 
L^2(G)$, a sequence $(c_k)_k$  in $L^2(G)$ and a sequence 
$(S_k)_k$ of closed neighborhoods  of $S$ such that 
\begin{equation}\label{convergence}\lim_k c_k=c \text{ and } 
 \lim_k P_{S_k\cdot c_k}(d) =0.
 \end{equation}

Moreover, if $S$ is a set of local spectral synthesis, there is a sequence $(c_k)_k\subset L^2(G)$ of functions of positive type satisfying (\ref{convergence}), with $c, d=A(c)\in L^2(G)$ from the canonical representation $u(s)=\langle d,\lambda(s)c\rangle=(A(c)\ast\tilde c)(s)$. 
%is the canonical representation of $u$. we can take the canonical representation of $u(s)=(A(c)\ast c)(s)=\langle A(c), \lambda(s)c\rangle$   and choose  a sequence $(c_k)_k\subset L^2(G)$ of function of positive type  satisfying the above condition with $d=A(c)$.
\end{theorem}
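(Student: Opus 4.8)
The plan is to prove both implications, and I expect the characterization to hinge on translating the $A(G)$-approximation condition defining local spectrality into an $L^2$-condition via the canonical representation, using Corollary \ref{ckak} as the crucial bridge.

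\textbf{Necessity.} Suppose $S$ is local spectral and let $u \in k(S) \cap C_c(G)$. By definition there is a sequence $(u_k)_k \subset A(G)$ with $u_k \to u$ in $A(G)$ and each $u_k$ vanishing on a neighborhood $S_k$ of $S$ (I would take $S_k$ to be a closed neighborhood on which $u_k$ vanishes, shrinking if needed so that $(S_k)$ are neighborhoods of $S$). Since $u \in C_c(G)$, its support is compact; the standard trick is to multiply each $u_k$ by a fixed $v \in A(G) \cap C_c(G)$ equal to $1$ on $\supp u$, replacing $u_k$ by $v u_k$, so that all the $u_k$ may be assumed supported in a common compact set $K$ while still converging to $u$ in $A(G)$ and vanishing near $S$. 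Now Corollary \ref{ckak} applies: writing the canonical representations $u = A(c)\ast c$ and $u_k = A_k(c_k)\ast c_k$, we get $\|c_k - c\|_2 \to 0$ and $\|A_k(c_k) - A(c)\|_2 \to 0$. Setting $d = A(c)$ and $d_k = A_k(c_k)$, so that $u(s) = \langle d, \lambda(s)c\rangle$, I then need $\lim_k P_{S_k \cdot c_k}(d) = 0$. The point is that $u_k$ vanishes on $S_k$, which should force $d_k = A_k(c_k)$ to be orthogonal to $\overline{\span}(S_k \cdot c_k) = \LA(S_k \cdot c_k)$: indeed $u_k(t) = \langle d_k, \lambda(t) c_k\rangle = \langle d_k, t \cdot c_k\rangle = 0$ for $t \in S_k$ says exactly $d_k \perp (S_k \cdot c_k)$, hence $P_{S_k \cdot c_k}(d_k) = 0$. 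Then $P_{S_k \cdot c_k}(d) = P_{S_k \cdot c_k}(d - d_k)$ has norm at most $\|d - d_k\|_2 = \|A(c) - A_k(c_k)\|_2 \to 0$, giving the required limit. For the "Moreover" part, when $S$ is local spectral synthesis the functions $c_k$ arising from the canonical representation are of positive type by Proposition \ref{decomp} (since $c$ is the positive square root of $p$), and $c, d = A(c)$ are exactly as claimed.

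\textbf{Sufficiency.} Conversely, given $u \in k(S) \cap C_c(G)$ together with data $c, d, (c_k), (S_k)$ satisfying (\ref{convergence}), I want to produce a sequence in $A(G)$ converging to $u$ with each term vanishing on a neighborhood of $S$. The natural candidate is $u_k(s) := \langle d - P_{S_k \cdot c_k}(d),\, \lambda(s) c_k\rangle$, or more cleanly $u_k(s) := \langle (I - P_{S_k \cdot c_k})(d), \lambda(s) c_k \rangle$. Writing $d_k := (I - P_{S_k \cdot c_k})(d)$, this $u_k$ lies in $A(G)$ as a matrix coefficient, and for $t \in S_k$ we have $\lambda(t) c_k \in \LA(S_k \cdot c_k)$, so $\langle d_k, \lambda(t) c_k\rangle = 0$ because $d_k$ is orthogonal to that subspace by construction; hence $u_k$ vanishes on the neighborhood $S_k$ of $S$. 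It remains to check $u_k \to u$ in $A(G)$. I estimate
\begin{equation*}
\|u_k - u\|_{A(G)} \leq \|\langle d_k, \lambda(\cdot) c_k\rangle - \langle d, \lambda(\cdot) c\rangle\|_{A(G)} \leq \|d_k\|_2 \|c_k - c\|_2 + \|d_k - d\|_2 \|c\|_2,
\end{equation*}
using the submultiplicative bound $\|\langle x, \lambda(\cdot) y\rangle\|_{A(G)} \leq \|x\|_2 \|y\|_2$ and splitting into one term varying $c_k$ and one varying $d_k$. Here $\|c_k - c\|_2 \to 0$ by hypothesis, $\|d_k\|_2 \leq \|d\|_2$ is bounded, and $\|d_k - d\|_2 = \|P_{S_k \cdot c_k}(d)\|_2 \to 0$ again by hypothesis; so $\|u_k - u\|_{A(G)} \to 0$, and $S$ is local spectral.

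\textbf{Main obstacle.} The genuinely delicate point is the reduction in necessity to a common compact support so that Corollary \ref{ckak} becomes applicable, and verifying that multiplication by a cutoff $v$ preserves both the $A(G)$-convergence and the vanishing-near-$S$ property while not destroying the canonical-representation structure; one must confirm that the modified sequence still has the form required and that shrinking the neighborhoods $S_k$ keeps them neighborhoods of $S$. A secondary subtlety is the identification $P_{S_k \cdot c_k}(d_k) = 0 \Leftrightarrow d_k \perp \LA(S_k \cdot c_k) \Leftrightarrow u_k|_{S_k} = 0$, which rests on the matrix-coefficient description $u_k(t) = \langle d_k, \lambda(t) c_k\rangle$ being exactly the pairing against the translates $t \cdot c_k$ spanning $\LA(S_k \cdot c_k)$; once this dictionary is set up cleanly, both directions become the short orthogonality arguments above.
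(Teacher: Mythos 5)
Your proof is correct and follows essentially the same route as the paper: the sufficiency direction via $d_k=(I-P_{S_k\cdot c_k})(d)$ with the same $A(G)$-norm estimate, and the necessity direction by cutting off to a common compact support, invoking Corollary \ref{ckak} for the canonical representations, and translating the vanishing of $u_k$ on $S_k$ into $P_{S_k\cdot c_k}(d_k)=0$. The only difference is that you spell out the cutoff argument that the paper leaves implicit, which is a welcome clarification rather than a deviation.
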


\begin{proof}
Assume $u\in k(S)\cap C_c(G)$ has a representation $u(s)=\langle d, \lambda(s)c\rangle$, $c$, $d\in L^2(G)$. 
Let $(c_k)_k\subset  L^2(G)$ and let $(S_k)_k$ be a sequence of 
neighborhoods of $S$, which satisfy the conditions of the theorem. Set 
$d_k=d-P_{S_k\cdot c_k}(d)$. Then $d_k\in \Lambda(S_k\cdot c_k)^\perp$ and 
hence $u_k(s):=\langle d_k,\lambda(s)c_k\rangle$ vanishes 
on $S_k$. Moreover,  
\begin{eqnarray*}
\|u-u_k\|_{A(G)}&\leq&\|\langle d_k, \lambda(s)(c-c_k)\rangle\|_{A(G)}+\|\langle d-d_k, \lambda(s)c_k\rangle\|_{A(G)}\\&\leq&\|c-c_k\|_2\|d_k\|_2+\|c_k\|_2\|d-d_k\|_2\to 0
\end{eqnarray*}
showing that $S$ is a set of local spectral synthesis.

Suppose now that $S$ is a set of local spectral synthesis and take $u\in k(S)\cap C_c(G)$. Let $K $ be  a 
compact neighborhood of the support of $u$. Since
$S$ is local spectral, there exists a sequence $(u_k)_{k}$ in $A(G)$,   such 
that,
for every $k$, $\supp{u_k}\subset K$,
$u_k$ vanishes on a neighborhood $S_k$ of $S$  and $u_k$ converges to $u$ in $A(G)$. 

Consider the canonical representation $u_k=A_k(c_k)\ast c_k$ and set $p_k=c_k\ast c_k$ and $d_k=A_k(c_k)$. 
%Using Proposition \ref{decomp}, we
%write for every $ k$, $ u_k=A_k(c_k)\ast c_k $ where $ c_k\in L^2(G)$ is  the
%positive square root of $p_k$, the absolute value of $\tilde u_k$,
%and   $A_k\in VN(G)$ is  a partial isometry, such
%that $ p_k=A_k^* A_k(p_k) $. Let  $ d_k=A_k(c_k) $. 
Then  $ d_k\in
\LA(S_k\cdot c_k)^\perp$, since $ u_k $ vanishes on $S_k $.
Therefore
$ P_{S_k\cdot c_k}(d_k)=0 $ for
every $ k$. By Corollary \ref{ckak},  $ \lim_{k }c_k=c, \lim_{k}d_k=d  $ and 
hence $
\lim_{k}P_{S _k\cdot c_k}(d)=0 $.
\end{proof}

\subsection{The abelian case}\label{abelian}

% \subsection{}
 The goal of this section is to provide a refinement of the characterization of local spectral sets from Section \ref{characterization} in the case of abelian groups.  For an abelian locally compact group $G$ let $\hat G$ be the dual of $G$. We write $\hat a$ for the Fourier transform of $a\in L^2(G)$.
 %that is $\hat a(\chi)=\int_G a(s)\overline{\chi(s)}ds$, $\chi\in\hat G$.  

\begin{lemma}\label{Ltwo norm}
Let $G $ be an abelian locally compact group.
Let $c,d\in \L2G $, such that $\hat d=\ps\hat c $ for some   $\ps\in \L\iy 
{\hat G}$. 
Then for any  subset $S $ of $G $ we have that 
\begin{eqnarray*}
 \no{P_{S\cdot c}(d)}_2\leq \no\ps_\iy\no{P_{S\inv\cdot d}(c)}_2.
 \end{eqnarray*}
\end{lemma}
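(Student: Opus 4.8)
The plan is to realize the projection norm as a supremum of pairings and then transport each pairing from the translates of $c$ to the translates of $d$, using the elementary unitary duality $\langle d,\la(t)c\rangle=\langle\la(t^{-1})d,c\rangle$ together with the multiplier relation $\hat d=\ps\hat c$. Since $\text{span}(S\cdot c)$ is dense in $\LA(S\cdot c)$ and $v\mapsto\langle d,v\rangle$ is continuous, I would start from
$$\|P_{S\cdot c}(d)\|_2=\sup\{|\langle d,v\rangle|:v\in\text{span}(S\cdot c),\ \|v\|_2\le 1\},$$
so that it suffices to bound $|\langle d,v\rangle|$ for a finite combination $v=\sum_i a_i\la(t_i)c$ with each $t_i\in S$.

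First I would rewrite the pairing. As each $\la(t_i)$ is unitary, $\langle d,\la(t_i)c\rangle=\langle\la(t_i^{-1})d,c\rangle$, and hence $\langle d,v\rangle=\langle w,c\rangle$ with $w:=\sum_i\overline{a_i}\,\la(t_i^{-1})d$. Because $t_i\in S$ forces $t_i^{-1}\in S^{-1}$, the vector $w$ lies in $\text{span}(S^{-1}\cdot d)\subset\LA(S^{-1}\cdot d)$; this is precisely where the inverse set $S^{-1}$ enters the statement. Since $w\in\LA(S^{-1}\cdot d)$ while $c-P_{S^{-1}\cdot d}(c)$ is orthogonal to $\LA(S^{-1}\cdot d)$, I may replace $c$ by its projection, $\langle w,c\rangle=\langle w,P_{S^{-1}\cdot d}(c)\rangle$, and Cauchy--Schwarz gives $|\langle d,v\rangle|\le\|w\|_2\,\|P_{S^{-1}\cdot d}(c)\|_2$.

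The remaining point is the norm comparison $\|w\|_2\le\|\ps\|_\iy\|v\|_2$, which I would obtain on the Fourier side. Setting $\Phi(\chi):=\sum_i a_i\,\overline{\chi(t_i)}$ and using $\widehat{\la(t)\xi}(\chi)=\overline{\chi(t)}\,\hat\xi(\chi)$ together with $\chi(t^{-1})=\overline{\chi(t)}$, one computes $\hat v=\Phi\,\hat c$ and $\hat w=\overline{\Phi}\,\hat d=\overline{\Phi}\,\ps\,\hat c$. By Plancherel,
$$\|w\|_2^2=\int_{\hat G}|\Phi|^2\,|\ps|^2\,|\hat c|^2\,d\mu\ \le\ \|\ps\|_\iy^2\int_{\hat G}|\Phi|^2\,|\hat c|^2\,d\mu=\|\ps\|_\iy^2\,\|v\|_2^2.$$
Combining the two bounds yields $|\langle d,v\rangle|\le\|\ps\|_\iy\,\|P_{S^{-1}\cdot d}(c)\|_2\,\|v\|_2$ for every $v\in\text{span}(S\cdot c)$, and taking the supremum over $\|v\|_2\le1$ gives the asserted inequality.

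The real content is the Fourier computation of the last paragraph; the only genuine care needed is the bookkeeping with conventions, namely the conjugates and inverses in $\widehat{\la(t)\xi}$ and in $\chi(t^{-1})=\overline{\chi(t)}$. It is exactly this bookkeeping that makes $w$ land in $\LA(S^{-1}\cdot d)$ rather than $\LA(S\cdot d)$, and thereby produces the asymmetry between $S$ and $S^{-1}$ in the statement; the density, continuity and Cauchy--Schwarz steps are otherwise routine.
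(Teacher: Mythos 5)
Your proposal is correct and follows essentially the same route as the paper: both realize $\|P_{S\cdot c}(d)\|_2$ as a supremum of pairings against elements of $\Lambda(S\cdot c)$, move the pairing over to $\Lambda(S^{-1}\cdot d)$ (your $\langle d,v\rangle=\langle w,c\rangle$ is, after Plancherel, exactly the paper's $\langle \eta,\varphi\xi\rangle=\langle\bar\varphi\eta,\xi\rangle$), and use $\hat d=\psi\hat c$ to get the factor $\|\psi\|_\infty$. The only cosmetic difference is that you work with finite linear combinations on the group side and pass to the Fourier side only for the norm estimate, whereas the paper phrases everything multiplicatively on $L^2(\hat G)$; your handling of density via continuity of the pairing is if anything slightly cleaner.
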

\begin{proof} 
We shall use the Plancherel theorem and consider therefore  the action of $G$ on the Hilbert 
space $\L2{\hat G} $: 
%The group $G $ acts   by multiplication on the elements of 
%$\L2{\hat G} $:
\begin{eqnarray*}
 t\cdot \xi(x)=\ch_t(x)\xi(x), x\in\hat G, \xi\in\L2{\hat G}, t\in G,
 \end{eqnarray*}
 where $\ch_t $ denotes the character of $\hat G $ defined by $t\in G $.
 
 Set $\xi:=\hat c $, $\et:=\hat d $. 
 We have 
% \begin{eqnarray}%\label{rel ps xi}
$$ \et=\ps \xi.$$
 %\end{eqnarray}

By the 
definition 
of $\LA(S\cdot \xi)  $ the elements  of the space $P_{\LA(S\cdot \xi)}(\L2{\hat 
G}) $ are of   the form $\va \xi $ for some measurable function 
$\va:\hat G\to \C $ and $\va\xi $ is an $L^{2} $-limit of functions 
$\va_k \xi $, where  
\begin{eqnarray*}
 \va_k=\sum_{j=1}^{m_k} c_j^k \ch_{s^k_j}
 \end{eqnarray*}
for some constants $c_j^k\in\C $ and $s_j^k\in S $.
Hence, for such a $\va \xi \in \LA(S\cdot \xi)$ we have that 
\begin{eqnarray}\label{va et in La}
\ol\va \et &=&
\ol\va \ps \xi= 
\ps \ol\va \xi\\
\nn  &= &
\ps (\limk \ol{\va_k} \xi)= 
\limk \ol{\va_k} \ps \xi\\
\nn  &= &
\limk \ol{\va_k} \ \et\in 
\LA(S\inv\cdot \et).
 \end{eqnarray}

Since 
\begin{eqnarray} \label{norm of proj}
 \no{P_{\LA(S\cdot \xi)}(\et)}_2=\sup\{\vert \langle \et,\va 
\xi\rangle\vert ; \va\xi \in \LA(S\cdot \xi),\no{\va \xi}_2=1\},
 \end{eqnarray}
it follows that for any $\va \xi $ of norm 1:
\begin{eqnarray*}
 \vert \langle \et,\va \xi\rangle\vert&=&\vert \langle \ol\va 
\et,\xi\rangle\vert\\
  \nn  &= & 
\vert \langle P_{\LA(S\inv\cdot \et)}(\ol\va \et),\xi\rangle\vert\\
\nn  &= &
\vert \langle \ol\va\et,P_{\LA(S\inv\cdot \et)}(\xi)\rangle\vert\\
 \nn  &\leq & 
 \no{\ol\va \ps \xi}_2\no{P_{\LA(S\inv\cdot \et)}(\xi)}_2\\
 \nn  &\leq &
 \no \ps_\iy\no{\ol\va \xi}_2\no{P_{\LA(S\inv\cdot\et)}(\xi)}_2\\
 \nn  &= &
\no \ps_\iy\no{P_{\LA(S\inv\cdot \et)}(\xi)}_2.
 \end{eqnarray*}
 Hence
 \begin{eqnarray*}
 \no{P_{\LA(S\cdot \xi)}(\et)}_2\leq \no 
{{\ps}}_\iy\no{P_{\LA(S\inv\cdot \et)}(\xi)}_2.
 \end{eqnarray*}

\end{proof}
\begin{corollary}\label{proj norm} 
Let $G $ be an abelian locally compact group. Let $c, d\in \L2G $ such that 
$\hat d=\ps \hat c$, where $\ps:\hat G\to\C $ is  a measurable function of 
absolute value equal to 1 on the support of $\hat c $. 
Then for any  subset $S $ of $G $ we have that 
\begin{eqnarray*}
 \no{P_{\LA(S\cdot c)}(d)}_2=\no{P_{\LA(S\inv\cdot d)}(c)}_2.
 \end{eqnarray*}
\end{corollary}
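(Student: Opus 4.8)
The plan is to obtain the equality by invoking Lemma \ref{Ltwo norm} twice, once in each direction, exploiting that $\psi$ is unimodular on the support of $\hat c$.

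First I would note that only the values of $\psi$ on the support of $\hat c$ are relevant, since $\hat d = \psi \hat c$. I may therefore redefine $\psi$ off that support to have modulus one, so that without loss of generality $|\psi| = 1$ throughout $\hat G$ and $\no{\psi}_\iy = 1$, while the relation $\hat d = \psi \hat c$ is preserved. Applying Lemma \ref{Ltwo norm} to the pair $(c,d)$ then gives directly
$$\no{P_{\LA(S \cdot c)}(d)}_2 \leq \no{\psi}_\iy \no{P_{\LA(S\inv \cdot d)}(c)}_2 = \no{P_{\LA(S\inv \cdot d)}(c)}_2.$$

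For the reverse inequality I would invert the relation between $\hat c$ and $\hat d$. Multiplying $\hat d = \psi \hat c$ by $\overline{\psi}$ and using $|\psi| = 1$ shows $\hat c = \overline{\psi}\,\hat d$ on the support of $\hat c$; off that support both $\hat c$ and $\hat d$ vanish, so the identity $\hat c = \overline{\psi}\,\hat d$ holds everywhere on $\hat G$. Since $\overline\psi$ is again unimodular, I would apply Lemma \ref{Ltwo norm} to the pair $(d,c)$, with multiplier $\overline\psi$ and with $S$ replaced by $S\inv$, noting that $(S\inv)\inv = S$, to obtain
$$\no{P_{\LA(S\inv \cdot d)}(c)}_2 \leq \no{\overline\psi}_\iy \no{P_{\LA(S \cdot c)}(d)}_2 = \no{P_{\LA(S \cdot c)}(d)}_2.$$
Combining the two displayed inequalities yields the asserted equality.

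The argument involves no genuine difficulty; the only care needed is in the bookkeeping. Specifically, one must check that $\psi$ may be taken unimodular on all of $\hat G$ without disturbing the relation $\hat d = \psi \hat c$, and one must correctly track the exchange of the roles of $c$ and $d$ together with the replacement $S \leftrightarrow S\inv$ when invoking Lemma \ref{Ltwo norm} in the second, reverse direction.
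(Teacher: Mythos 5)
Your proof is correct and follows essentially the same route as the paper's: both apply Lemma \ref{Ltwo norm} twice, once to $\hat d=\psi\hat c$ and once to the inverted relation $\hat c=\overline{\psi}\,\hat d$ with $S$ replaced by $S^{-1}$, and combine the two inequalities. Your extra care in renormalizing $\psi$ to be unimodular off the support of $\hat c$ (so that $\|\psi\|_\infty=1$ in the Lemma's bound) is a detail the paper leaves implicit, and it is handled correctly.
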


\begin{proof} As $\hat d=\ps \hat c$ and  $\hat c=\ol\ps\hat d $, Lemma \ref{Ltwo 
norm} gives us 
\begin{eqnarray*}
 \nn &\no{P_{\LA(S\cdot c)}(d)}_2 \leq  \no{P_{\LA(S\inv\cdot d)}(c)}_2&
 \\
 \nn&\leq
 \no{P_{\LA(S\cdot c)}(d)}_2\\
 &\Rightarrow\\
 &\no{P_{\LA(S\cdot c)}(d)}_2 =\no{P_{\LA(S\inv\cdot d)}(c)}_2.
 \end{eqnarray*}

\end{proof}

\begin{lemma}\label{Skukiszero}
Let $G $ be  a locally compact, $\sigma$-compact, abelian group. Let $(S_k)_k $ be a  sequence 
of closed subsets of $G $. 
Let $ (u_k)_{k}$ 
be a 
converging sequence in $A(G)$ with limit $u\in A(G)\cap L^2(G)$, 
such that 
$u_k(S_k)=\{ 0\} $ for every $k $.  Let $u=d\ast c $ be the canonical 
form of $u\in\l2G $. Then $\limk P_{S_k\cdot c}(d)=0 $.

 \end{lemma}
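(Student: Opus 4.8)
The plan is to pass to the canonical representations of the $u_k$, read the \emph{exact} vanishing $u_k|_{S_k}=0$ as an orthogonality relation, and then transport the resulting estimate from the moving generators $c_k$ to the fixed generator $c$. Throughout I work on the Fourier side, where $A(G)\cong L^1(\hat G)$ isometrically and the Plancherel transform identifies $L^2(G)$ with $L^2(\hat G)$.

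First I would produce the canonical data for the $u_k$ and show it converges. Write the canonical representations $u=d\ast c$, $u_k=d_k\ast c_k$, with $\hat c=|\hat u|^{1/2}\ge 0$, $\hat d=\psi\hat c$ where $|\psi|=1$ on $\mathrm{supp}\,\hat c$, and analogously $\hat c_k=|\hat u_k|^{1/2}$, $\hat d_k=\psi_k\hat c_k$. Since $u_k\to u$ in $A(G)$ gives $\hat u_k\to\hat u$ in $L^1(\hat G)$, we get $|\hat u_k|\to|\hat u|$ in $L^1$, and from $|\sqrt a-\sqrt b|^2\le|a-b|$ it follows that $\hat c_k\to\hat c$ in $L^2(\hat G)$, i.e. $\|c_k-c\|_2\to0$. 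Along a subsequence with $\hat u_k\to\hat u$ a.e. the phases converge a.e., so $\hat d_k\to\hat d$ a.e.; combined with $\|\hat d_k\|_2=\|\hat c_k\|_2\to\|\hat c\|_2=\|\hat d\|_2$ this upgrades to $L^2$-convergence, and a subsequence argument yields $\|d_k-d\|_2\to0$ for the full sequence. (Alternatively, when the $u_k$ share a common compact support one may invoke Corollary~\ref{ckak}.)

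Second, the heart of the argument. Because $u_k(s)=\langle d_k,\lambda(s)c_k\rangle=0$ for every $s\in S_k$, the vector $d_k$ is orthogonal to each $\lambda(s)c_k$, hence to the whole closed span, so $P_{S_k\cdot c_k}(d_k)=0$. Writing $\pi_k=P_{S_k\cdot c_k}$ and using that $\pi_k$ is a self-adjoint projection,
$$\|\pi_k(d)\|_2^2=\langle\pi_k(d),d\rangle=\langle\pi_k(d),d-d_k\rangle+\langle d,\pi_k(d_k)\rangle=\langle\pi_k(d),d-d_k\rangle\le\|\pi_k(d)\|_2\,\|d-d_k\|_2,$$
whence $\|P_{S_k\cdot c_k}(d)\|_2\le\|d-d_k\|_2\to0$. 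This clean step uses the exact subspace orthogonality of $d_k$, which is genuinely stronger than the mere smallness of $u$ on $S_k$.

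Finally I would transfer the estimate from the generator $c_k$ to $c$. On $L^2(\hat G)$ the operators $P_{S_k\cdot c}$ and $P_{S_k\cdot c_k}$ are the projections onto the closed spans generated from $\hat c$ and $\hat c_k$, applied to the fixed vector $\hat d$, and the generators satisfy $\|\chi_s\hat c-\chi_s\hat c_k\|_2=\|\hat c-\hat c_k\|_2\to0$ uniformly in $s$. The goal $\|P_{S_k\cdot c}(d)\|_2\to0$ would follow once the element $w_k=P_{S_k\cdot c}(d)$ is shown to lie asymptotically in the span generated by $\hat c_k$, so that the estimate of the previous step applies. \emph{This subspace comparison is the main obstacle}: a small $L^2$-perturbation of the generating vector need not move the projection continuously, since the symbol $\varphi_k$ (with $\hat w_k=\varphi_k\hat c$) is only controlled in $L^2(|\hat c|^2\,dx)$ and carries no uniform bound. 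I would resolve it by splitting off the cutoff $\{|\hat c|\le\delta\}$, on which $\|\hat d\,\mathbf 1_{\{|\hat c|\le\delta\}}\|_2^2\le\int_{\{|\hat c|\le\delta\}}|\hat c|^2\,dx\to0$ as $\delta\to0$ uniformly in $k$, and on $\{|\hat c|>\delta\}$, where $\hat c$ is bounded below, comparing the two projections through the bounded multiplier $\hat c/\hat c_k$ and $\|c_k-c\|_2\to0$, combined with the abelian exchange estimate of Lemma~\ref{Ltwo norm} and Corollary~\ref{proj norm} and the smallness $\|P_{S_k\cdot c_k}(d)\|_2\to0$ established above.
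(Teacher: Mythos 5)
Your first two steps are sound and, in fact, your Fourier-side derivation of $\|c_k-c\|_2\to0$ and $\|d_k-d\|_2\to0$ directly from $\hat u_k\to\hat u$ in $L^1(\hat G)$ (via $|\sqrt a-\sqrt b|^2\le|a-b|$ and a.e.\ convergence of phases plus convergence of norms) is a clean abelian shortcut: the paper instead has to truncate by a sequence $v_i\in C_c(G)\cap A(G)$ with $v_iu\to u$ and run Theorem~\ref{l2l2} on the truncated functions $v_iu_k$, precisely because the $u_k$ are not assumed to lie in $L^2(G)$ or to have common compact support. The orthogonality step $P_{S_k\cdot c_k}(d_k)=0$ and hence $\|P_{S_k\cdot c_k}(d)\|_2\le\|d-d_k\|_2\to0$ is exactly what the paper does.

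The transfer step, however, is a genuine gap, and the repair you sketch does not work. On $\{|\hat c|>\delta\}$ the multiplier $\hat c/\hat c_k$ need not be essentially bounded: $L^2$-convergence $\hat c_k\to\hat c$ only controls $\hat c_k$ off a set of small measure, so $\hat c_k$ may vanish (or be arbitrarily small) on a positive-measure subset of $\{|\hat c|>\delta\}$. Moreover Lemma~\ref{Ltwo norm} and Corollary~\ref{proj norm} compare $\Lambda(S\cdot c)$ with $\Lambda(S^{-1}\cdot d)$ for a \emph{single} pair related by a multiplier; they give you no handle on the two subspaces $\Lambda(S_k\cdot c)$ and $\Lambda(S_k\cdot c_k)$ generated by different vectors, and truncating $\hat c$ to $\{|\hat c|>\delta\}$ changes the generated subspace in an uncontrolled way. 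The paper's resolution is to avoid comparing these two subspaces altogether: by Corollary~\ref{proj norm}, $\|P_{S_k\cdot c}(d)\|_2=\|P_{S_k^{-1}\cdot d}(c)\|_2$, and in the right-hand expression the subspace is generated by the \emph{fixed} vector $d$ while the perturbation $c\mapsto c_k$ now sits in the argument of the projection, where it costs only $\|c-c_k\|_2$ since $\|P(x)-P(y)\|_2\le\|x-y\|_2$; flipping back with Corollary~\ref{proj norm} lands on $\|P_{S_k\cdot c_k}(d)\|_2$, which your second step has already shown tends to $0$. (The paper runs this chain for the truncated functions $v_iu_k$, $v_iu$ and then lets $i\to\infty$ using $c^i\to c$, $d^i\to d$.) Without this role-reversal, or some substitute for it, your argument does not close.
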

 \begin{proof} 

Choose a sequence $(v_i)_i\in C_c(G)\cap A(G) $, such that 
$\no{v_i}_{A(G)}\leq 1, i\in\N, $ and such that $\lim_{i\to\iy}v_iu=u $ in 
$A(G) $. Then the sequence $(v_iu_k)_k\in\N, $ converges in $A(G) $ and  in $\L2G $ to 
$v_iu  $. We write the elements $u^i_k:=v_i u_k $ in the canonical form 
$u_k^i=d_k^i\ast  c^i_k$, with $p^i_k=\widehat{c_k^i}\geq 0 $ and 
$\widehat{d^i_k}=\va^i_k \widehat{c^i_k} $ and ${\vert \va^i_k\vert 
}=1_{p^i_k}, k\in\N$. Similarly for $v_iu=d^i\ast c^i $. Then  Theorem 
\ref{l2l2} tells us   that the sequence 
$(c_k^i)_k $ converges in $\L2G $ to $c^i $ and the sequence $(d^i_k)_k $ to $ 
d^i $. Since $u^i_k(S_k)=\{ 0\} $ for any $k,i\in\N $, we have that
\begin{eqnarray*}
 \nn P_{S_k\cdot c_k^i}(d^i_k) &= &
 0, k,i\in\N.
 \end{eqnarray*}
 Therefore, since $\limk d^i_k=d^i $ for any $i $, it follows that 
 \begin{eqnarray*}
 \nn \limk P_{S_k\cdot c_k^i}(d^i) &= &
 0.
 \end{eqnarray*}
Hence  
by Corollary \ref{proj norm} 
 \begin{eqnarray*}
 \nn \limk \Vert{P_{S_k^{-1}\cdot d^i}(c_k^i) }\Vert_{2 }  &= 
&\limk\no{P_{S_k\cdot 
c_k^i}(d^i)}_{2}\\
 \nn  
&= & 
 0,i\in\N. 
 \end{eqnarray*}
Finally
 \begin{eqnarray*}
 \nn   \limk\no{
 P_{S_k\cdot c^i}(d^i)}_2&=&
 \limk\no{
 P_{S_k^{-1}\cdot d^i}(c^i)}_2
  \\
 &= &
 \limk\no{P_{S_k^{-1}\cdot d^i}(c_k^i)}_2=
 0.
 \end{eqnarray*} 
   
 Now, since $u\in\L2G $,  again by Theorem \ref{l2l2},
$\limi c^i=c, \limi 
d^i=d.  $  Therefore  it follows as before that 
\begin{eqnarray*}
 \nn  \Vert{P_{S_k\cdot c}(d) }\Vert_2 &\leq &
 \Vert{P_{S_k\cdot c}(d^i-d) }\Vert_2 +\Vert{P_{S_k\cdot c}(d^i) }\Vert_2\\
 \nn  
&= &
\Vert{P_{S_k\cdot c}(d^i-d) }\Vert_2 +\Vert{P_{S_k\inv\cdot d^i}(c) }\Vert_2\\
\nn  
&\leq &
\Vert{P_{S_k\cdot c}(d^i-d) }\Vert_2 +\Vert{P_{S_k\inv\cdot d^i}(c-c^i)\Vert_2 
}+\Vert{P_{S_k\inv\cdot d^i}(c^i)\Vert_2 
}\\
\nn  
&= &
\Vert{P_{S_k\cdot c}(d^i-d) }\Vert_2 +\Vert{P_{S_k\inv\cdot d^i}(c-c^i)\Vert_2 
}+\Vert P_{S_k\cdot c^i}(d^i)\Vert_2. 
\end{eqnarray*}
Therefore
\begin{eqnarray*}
 \nn \Vert{P_{S_k\cdot c}(d) }\Vert_2 &\leq &
  \Vert{c-c^i }\Vert_2+ \Vert{d-d^i }\Vert_2+\Vert P_{S_k\cdot c^i}(d^i)\Vert_2
 \end{eqnarray*}
and so for every $i\in\N $
\begin{eqnarray*}
 \nn  {\limk \Vert P_{S_k\cdot c}(d) }\Vert_2&\leq&
\limk\no{
 P_{S_k\cdot c^i}(d^i)}_2+\Vert{c-c^i }\Vert_2+ \Vert{d-d^i }\Vert_2\\
 \nn  
&= &
\Vert{c-c^i }\Vert_2+ \Vert{d-d^i }\Vert_2.
\end{eqnarray*}
This shows that 
$\limk P_{S_k\cdot c}(d)=0$ 

\end{proof}

 \begin{cor}\label{characterization spectral G abelian}
Let  $G$ be a locally compact, $\sigma$-compact, abelian group and let $S$ be a closed subset
 of $G$. Then $S$ is a  spectral set if and only if for every $u\in 
k(S)\cap C_c(G) $ ($u=d\ast c $ being its canonical expression)  we 
have   a decreasing 
sequence $(S_k)_k $ of closed neighborhoods of $S $ 
%-such that   for the decreasing sequence of closed subspaces $\S_k:=\LA(S_k\cdot c) $ we have 
 such that $\limk P_{S_k\cdot c}(d)=0 $.

 \end{cor}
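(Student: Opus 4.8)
The plan is to deduce this abelian corollary from the general characterization of local spectral sets in Theorem \ref{spectralstrcon}, using the abelian identity for projection norms (Corollary \ref{proj norm}) already built into Lemma \ref{Skukiszero}, and to bridge between ``spectral'' and ``local spectral'' by amenability. First I would record the facts that make this bridge work for an abelian $G$. Since $G$ is abelian it is amenable, so $A(G)$ has an approximate identity and, as recalled in Section 2, every local spectral set is spectral; conversely, if $S$ is spectral then $k(S)=\overline{j(S)}$, so any $u\in k(S)\cap C_c(G)$ is an $A(G)$-limit of elements of $j(S)$, each of which vanishes on the open complement of its support, hence on a neighborhood of $S$. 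Thus for abelian $G$ the notions of spectral and local spectral set coincide. Abelian groups are moreover right positive, directly from the Plancherel theorem on $\hat G$ (for positive definite $p$ one has $\langle\eta\ast p,\eta\rangle=\int_{\hat G}|\hat\eta|^2\hat p\,d\mu\ge 0$), so Theorem \ref{spectralstrcon} and Lemma \ref{Skukiszero} both apply.

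For the necessity direction, fix a spectral set $S$ and $u\in k(S)\cap C_c(G)$ with canonical form $u=d\ast c$. By the coincidence above $S$ is local spectral, so there is a sequence $(u_k)\subset A(G)$ with $u_k\to u$ in $A(G)$ and, for each $k$, a closed neighborhood $S_k$ of $S$ on which $u_k$ vanishes. Lemma \ref{Skukiszero} then gives $\lim_k P_{S_k\cdot c}(d)=0$. To upgrade $(S_k)$ to a decreasing sequence I would use the monotonicity $\|P_{T'\cdot c}(d)\|_2\le\|P_{T\cdot c}(d)\|_2$ whenever $T'\subseteq T$, which follows from $\Lambda(T'\cdot c)\subseteq\Lambda(T\cdot c)$ together with $P_{M'}=P_{M'}P_M$ for nested subspaces $M'\subseteq M$. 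Passing to a subsequence $(S_{k_n})$ and inductively choosing closed neighborhoods $\tilde S_n\subseteq S_{k_n}\cap\tilde S_{n-1}$ of $S$ then produces a decreasing sequence with $\|P_{\tilde S_n\cdot c}(d)\|_2\le\|P_{S_{k_n}\cdot c}(d)\|_2\to 0$, as required.

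For the sufficiency direction, assume the stated condition and fix $u\in k(S)\cap C_c(G)$ with canonical representation $u(s)=\langle d,\lambda(s)c\rangle$, $d=A(c)$. The hypothesis provides a decreasing sequence $(S_k)$ of closed neighborhoods of $S$ with $\lim_k P_{S_k\cdot c}(d)=0$. Taking the constant sequence $c_k:=c$, the data $(c_k)$ and $(S_k)$ satisfy $\lim_k c_k=c$ and $\lim_k P_{S_k\cdot c_k}(d)=0$, which are exactly the hypotheses of the sufficiency half of Theorem \ref{spectralstrcon}; hence $S$ is local spectral, and therefore spectral by amenability.

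Thus both directions reduce to short applications of Theorem \ref{spectralstrcon} and Lemma \ref{Skukiszero} once the spectral/local-spectral equivalence is in place. The only point requiring genuine care is the passage to a decreasing sequence of neighborhoods in the necessity direction; I expect this to be the main, though mild, obstacle, and it is resolved cleanly by the monotonicity of the projection norms under inclusion of the indexing sets.
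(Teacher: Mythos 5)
Your proof is correct and follows essentially the same route as the paper: necessity by feeding the approximating sequence coming from (local) spectrality into Lemma~\ref{Skukiszero}, and sufficiency by the explicit approximants $u_k=(d-P_{S_k\cdot c}(d))\ast c$, i.e.\ the sufficiency half of Theorem~\ref{spectralstrcon} with the constant choice $c_k=c$. The only cosmetic difference is in arranging the neighborhoods to be decreasing: the paper simply replaces $S_k$ by $\bigcap_{j\le k}S_j$ before invoking Lemma~\ref{Skukiszero}, whereas you shrink afterwards using the monotonicity $\|P_{T'\cdot c}(d)\|_2\le\|P_{T\cdot c}(d)\|_2$ for $T'\subseteq T$; both work.
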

 \begin{proof} Suppose that $S $ is spectral. 
Let $u\in k(S)\cap C_c(G)$ and $u=d\ast c $ be its canonical 
expression. Since $S $ is spectral, 
there 
exists a sequence of neighborhoods $(S_k)_{k} $ of $ S$, which can be chosen to be decreasing by taking intersections,  
 and a sequence 
$(u_k)_k\subset A(G) $, 
such that 
$\limk u_k=u $ in $ A(G)$ and such that $u_k $ vanishes on $S_k, 
k\in\N. 
$ By Lemma \ref{Skukiszero}, $\limk P_{S_k\cdot c}(d)=0 $.

Conversely, if $u=d\ast c \in A(G)\cap C_c(G)$ vanishes on $S $ and if $\limk P_{S_k\cdot c}(d)=0 $,
%$\limk \LA(S_k\cdot c)(d)=0$, 
then  the sequence $(u_k:=(d-P_{S_k\cdot c}(d))\ast c)_k $ converges in 
$A(G) 
$ to $u $ and $u_k(S_k)=\langle (d-P_{S_k\cdot c}(d),S_k\cdot c\rangle =\{ 0\} $ for 
any 
$k\in\N $. Therefore, $S$ is a local spectral set. As $G$ is abelian, it is a spectral set. %\marginpar{\tiny need an argument for decreasing sequence }

%If we take any $u\in A(G) $ vanishing on $S $, then we use a bounded 
%approximate unit in $A(G) $ to  approximate $u $ with 
%a sequence $(u^i=c^i\ast d_i)_i\subset A(G)\cap C_c(G)$  vanishing on $S 
%$. Then we can approximate 
%each $u^i $   with a sequence $ (u_k')_k\subset j(S)$ and finally we see that 
%$u  $ itself is  a limit of functions contained in $j(S) $.
\end{proof}
 
  \section{strongly spectral  sets and the union problem}\label{strong}

In this section we will introduce a new class of sets which includes Ditkin 
sets 
and show that it is closed under the operation of forming finite unions.

{It is easy to see that the union of two disjoint (local) spectral  sets  and 
of 
two Ditkin sets is  a (local) spectral set.
The question about  the union of any two non-disjoint (local) spectral sets  
was raised  in the paper \cite{H1} of C. Herz (for abelian groups) and three  
years later again by H. Reiter in  \cite{Re}. The problem  remains open. }
   
\medskip

% Let $S $ be a closed subset of the locally compact group $G $.
% We define the ideal $k(S) $ of $A(G) $ by
% \begin{eqnarray*}\label{}
% \nn k(S) &= &
% \{ u\in A(G)\vert  u(s)=0, s\in S\}.
% \end{eqnarray*}

{\begin{definition}\label{strsp}$ $
We say that a closed subset $ S $ of $ G $ is \textit{strongly 
(local) 
spectral}, if for every compact subset $T $ of $G $ and any $ f\in k(S\cup T)$ 
($f\in k(S\cup T)\cap C_c(G)$) and any $ 
\ve>0 $ 
there exists an element  $ g_\ve $ in $j(S) \cap k(T)$  such that  $ 
\noag{f-g_\ve}<\ve$ 
.

 \end{definition}}   
   
   \begin{remark}\rm 
Any Ditkin set  is obviously strongly  spectral, but we do not know whether the converse is true. 

     We note that if the group $G$ is such that $u\in\overline{uA(G)}$ for each $u\in A(G)$ (for instance, $G$ is amenable or, more generally,  when $A(G)$ has an (unbounded) approximate unit),  then $S\subset G$ is strongly spectral set if and only if, for any $f\in k(S)$ and any $\ve>0$, there exists a function $g_\ve\in j(S)$ vanishing on $\nul(f)$ and satisfying $\noag{f-g_\ve}<\ve$. In fact,  in the latter case, if $T\subset G$ is compact and $f\in k(S\cup T)$ then, as $T\subset\nul(f)$, the function $g_\ve$ vanishes on $T$. To see the converse, first note  that the set of compactly supported functions in $A(G)$ is dense in $A(G)$. Assume that $S$ is strongly spectral and let $f\in k(S)$. Given $\ve>0$, there exist a compactly supported $h\in A(G)$ and $\tilde g_\ve\in j(S)\cap k(\nul(f)\cap\supp{h})$ such that $\|f-fh\|_{A(G)}<\ve$ and 
     $\|f-\tilde g_\ve\|_{A(G)}<\ve/\|h\|_{A(G)}$. Setting $g_\ve=\tilde g_\ve h$, we find that  $g_\ve$ vanishes on $\nul(f)\subset\nul(h)\cup(\nul(f)\cap\supp{h})$ and satisfies
     $\|f-g_\ve\|_{A(G)}\leq\|f-fh\|_{A(G)}+\|fh-\tilde g_\ve h\|_{A(G)}\leq 2\ve$. This establishes the statement.  A similar result holds for the local version. If $S$ is compact the equivalence is clear without the additional assumption on $G$. 
   \end{remark}
   
   The next statement is a union result for strongly spectral sets, where we assume that either $G$ has the property that $u\in\overline{uA(G)}$, or $S_1$ and $S_2$ are compact.  The proof is similar to the one for Ditkin sets \cite{warner}.

%We find in this way 

\begin{theorem}\label{spunion}
Let $ S_1,S_2 $ be two strongly (local) spectral subsets of the locally compact 
group $ G $. Then the union $ S:=S_1\cup S_2 $ is also strongly (local) spectral. 

If $S_1$ and $S_2$ are closed subsets such that $S_1\cap S_2$ is strongly (local) spectral then $S_1\cup S_2$ is strongly (local) spectral if and only if so are $S_1$ and $S_2$. 
 \end{theorem}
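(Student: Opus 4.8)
The plan is to reduce everything to the multiplicative reformulation of strong spectrality recorded in the remark following Definition~\ref{strsp}, which is available under either of the two standing hypotheses ($u\in\overline{uA(G)}$ for all $u$, or $S_1,S_2$ compact): a closed set $S$ is strongly (local) spectral if and only if every $f\in k(S)$ (in the local case, every $f\in k(S)\cap C_c(G)$) can be approximated in $A(G)$, within any prescribed $\varepsilon>0$, by some $g\in j(S)$ that vanishes on $\nul(f)$. The engine of the proof is one elementary observation: if $g\in j(S_i)$ then $\supp{g}$ is disjoint from the closed set $S_i$, so $g$ vanishes on the \emph{open} set $\supp{g}^c\supseteq S_i$; hence any function vanishing on $\nul(g)$ also vanishes on this open neighbourhood of $S_i$ and therefore has support disjoint from $S_i$. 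This is exactly what allows the support-disjointness built into $j(\cdot)$ (stronger than the mere vanishing encoded in $k(\cdot)$) to survive the successive approximations, remembering that $j(S_1\cup S_2)=j(S_1)\cap j(S_2)$.

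For the first assertion I would proceed in two approximation steps. Given $f\in k(S_1\cup S_2)=k(S_1)\cap k(S_2)$ and $\varepsilon>0$, strong spectrality of $S_1$ produces $g_1\in j(S_1)$ vanishing on $\nul(f)$ with $\|f-g_1\|_{A(G)}<\varepsilon/2$; since $S_2\subseteq\nul(f)\subseteq\nul(g_1)$ we get $g_1\in k(S_2)$, and since $g_1\in j(S_1)$ its zero set contains an open neighbourhood $U$ of $S_1$. Applying strong spectrality of $S_2$ to $g_1$ yields $g_2\in j(S_2)$ vanishing on $\nul(g_1)$ with $\|g_1-g_2\|_{A(G)}<\varepsilon/2$. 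As $g_2$ vanishes on $\nul(g_1)\supseteq U$, the key observation gives $\supp{g_2}\cap S_1=\emptyset$, so $g_2\in j(S_1)\cap j(S_2)=j(S_1\cup S_2)$; it also vanishes on $\nul(g_1)\supseteq\nul(f)$ and satisfies $\|f-g_2\|_{A(G)}<\varepsilon$. By the remark this is precisely strong (local) spectrality of $S_1\cup S_2$, and it is also the $(\Leftarrow)$ direction of the second assertion.

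For the $(\Rightarrow)$ direction of the second assertion, fix $f\in k(S_1)$ and $\varepsilon>0$ and show $S_1$ is strongly spectral (the case of $S_2$ is symmetric). As $f\in k(S_1)\subseteq k(S_1\cap S_2)$, strong spectrality of $S_1\cap S_2$ gives $h\in j(S_1\cap S_2)$ vanishing on $\nul(f)$ with $\|f-h\|_{A(G)}<\varepsilon/2$. Now $\supp{h}$ is compact and disjoint from $S_1\cap S_2$, so $S_1\cap\supp{h}$ and $S_2\cap\supp{h}$ are disjoint compact sets; by regularity of $A(G)$ pick $\phi\in A(G)\cap C_c(G)$ that is $1$ on a neighbourhood of $S_2\cap\supp{h}$ and $0$ on a neighbourhood of $S_1\cap\supp{h}$, and split $h=h_1+h_2$ with $h_1=(1-\phi)h$, $h_2=\phi h$. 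Because $\supp{h_j}\subseteq\supp{h}$, only the traces $S_i\cap\supp{h}$ matter, and one checks that $\supp{h_2}\cap S_1=\emptyset$, so $h_2\in j(S_1)$, while $\supp{h_1}\cap S_2=\emptyset$ and $h_1$ vanishes on $S_1$, so $h_1\in k(S_1\cup S_2)$. Strong spectrality of $S_1\cup S_2$ applied to $h_1$ then gives $g_1\in j(S_1\cup S_2)\subseteq j(S_1)$ vanishing on $\nul(h_1)\supseteq\nul(f)$ with $\|h_1-g_1\|_{A(G)}<\varepsilon/2$. Finally $g:=g_1+h_2\in j(S_1)$ vanishes on $\nul(f)$ and, since $g-h=g_1-h_1$, satisfies $\|f-g\|_{A(G)}\le\|f-h\|_{A(G)}+\|h_1-g_1\|_{A(G)}<\varepsilon$; by the remark, $S_1$ is strongly (local) spectral.

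The step I expect to be the main obstacle is the partition-of-unity splitting: one must check carefully that $\supp{h_2}$ misses \emph{all} of $S_1$ (and $\supp{h_1}$ all of $S_2$), not merely their intersections with $\supp{h}$. This works because $\supp{h_j}\subseteq\supp{h}$ forces $\supp{h_j}\cap S_i\subseteq S_i\cap\supp{h}$, and $\phi$ is locally constant near these traces. A secondary technical point is to keep every approximant compactly supported and vanishing on $\nul(f)$ at each stage, so that the argument yields the local version in parallel with the global one, and to confirm that the remark's reformulation is genuinely available for $S_1\cap S_2$ and $S_1\cup S_2$ under whichever of the two standing hypotheses is assumed.
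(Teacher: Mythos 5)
Your proposal is correct and follows essentially the same route as the paper: the same two-step approximation (through $j(S_1)\cap k(\nul(f))$ then $j(S_2)$) for the union, and the same cutoff-function splitting $h=(1-\phi)h+\phi h$ with $\phi$ separating $S_2\cap\supp{h}$ from $S_1$ for the converse direction. The only cosmetic difference is that the paper's cutoff vanishes on a neighbourhood of all of $S_1$ rather than only of $S_1\cap\supp{h}$; both choices work.
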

\begin{proof}
We show the statement for strongly spectral sets. Let  $ u\in k({ S })$. Then $ u\in k({S_1}) $ and therefore for any $ \ve>0 $,
there exists $ u_1\in j(S_1)\cap k(\text{null}{(u)})$ such that $ \no{u-u_1}_{A(G)}<\ve $. 
 As $\text{null}{(u)}\supset S_2$, $u_1\in k(S_2)$ and, since $S_2$ is strongly 
spectral, 
there exists $u_2\in j(S_2)$ such that $\no{u_2-u_1}_{A(G)} <\ve $
and
 $ \text{null}{(u_2)}\supset \text{null}{(u_1)}$.  We   then have that 
$u_2 $ vanishes 
on $W_1\cup W_2$, where $W_1$ and $W_2$ are some neighborhoods of $S_1$ and 
$S_2$ respectively and hence $u_2\in j(S_1\cup S_2)=j(S)$.

Furthermore,
$$ \noag{u_2-u}\leq \noag{u_2-u_1}+\noag{u-u_1}<2\ve.$$
This shows that $ S $ is strongly spectral.

Assume now that $S_1$ and $S_2$ are closed subsets such that $S_1\cap S_2$ is strongly spectral. Suppose that $S_1\cup S_2$ is strongly spectral and let $u\in k(S_1)$ and $\ve>0$.  Then $u\in k(S_1\cap S_2)$ and there exists $g\in j(S_1\cap S_2)\cap k(\nul(u))$ such that $\|u-g\|_{A(G)}<\ve$. Let $C=S_2\cap\supp{g}$. It is disjoint from $S_1$ and hence there exists $w\in A(G)$ such that $w=1$ on $C$ and $w=0$ on a neighborhood of $S_1$. Set $h=g-gw$. Then $h$ vanishes on $C$ and hence on $S_2\subset (S_2\cap\supp{g})\cup\nul(g)$. As $g$ vanishes on $\nul(u)\supset S_1$, we obtain that $h\in k(S_1\cup S_2)$. Therefore there exists $g'\in j(S_1\cup S_2)\cap k(\nul(h))$ such that $\|h-g'\|_{A(G)}<\ve$. We have that $g'+wg\in j(S_1)\cap k(\nul(u))$ and 
$$\|u-g'-wg\|_{A(G)}\leq \|u-g\|_{A(G)}+\|g-wg-g'\|_{A(G)}<2\ve,$$
that is $S_1$ is strongly spectral. The proof for $S_2$ follows by symmetry. 
\end{proof}

Introducing the class of strongly spectral sets, our hope was that, using the technique developed in the previous section, we could prove that any set of local spectral synthesis is strongly spectral. 
Let 
 $T $ be a 
compact subset of an abelian, locally compact, $\sigma$-compact group $G $. 
Suppose that the function $u= d\ast c$ (in its canonical form) also  vanishes  on $T $, that is,
$d\in ((S\cup T)\cdot  c)^\perp  $. Let
  \begin{eqnarray*}
   r_k&:=&P_{(U_kS\cup T)\cdot c}(d), k\in\N,
 \end{eqnarray*}
 where $(U_k)_k$ is a sequence of decreasing neighborhoods of $e$ such that  $\limk P_{U_kS\cdot c}(d)=0 $.
Since the sequence of compact subsets   $(U_k)_{k\in\N} $
is decreasing, the sequence of closed subspaces  
$((U_kS\cup T)\cdot c )_{k\in\N} $ is also decreasing. Consequently, the limit
\begin{eqnarray*}  
 \nn r_\iy\nn  
&:= &
\limk r_k
\end{eqnarray*}
exists in $L^2(G)$.
The statement would follow if one could show that $\nn r_\iy=0$.

\section{Appendix}

%\subsection{Actions of $VN(G)$ on $A(G)$}
In this section we shall collect some of the properties of  $A(G)$ as an 
$VN(G)$-module.

\par
Following \cite[III.2]{ta} we  define the left and
right action of a von Neumann algebra $\mathcal M$ on its predual space 
${\mathcal 
M_*}$
 in the following way:
\begin{equation*}
\sp S{T\cdot u}:=\sp{ST} u,\ \ \sp S{u\cdot T}:=\sp {TS}u,\ S,T\in \mathcal M,\
 u\in {\mathcal M}_*.
  \end{equation*}

  Let us write $u=\ga_{f,g}\in A(G)$  for the coefficient of
  the left regular representation
  defined by  $f,g\in L^2(G)$, i.e. $\ga_{f,g}=\langle\lambda(t)f,g\rangle$.
  %and $\ga_{f,g}^r\in A(G)$ for the
  %coefficients  of the right regular representation, i.e
  %$$\ga_{f,g}^r(t)=\langle r(t)f,g\rangle.$$
  %Then
   %\begin{equation*}\label{}
 %\sp{r(t)V(f)} {V(g)}=:\ga_{V(f),V(g)}^r=\ga^\la_{f,g}(t):=
 %\sp {\la(t)f} g,\ t\in G.
  %\end{equation*}

%For $\va\in\{\lambda,r\}$ in what follows we denote by $VN_\va(G)$ the algebra 
%$VN(G)$ if $\va=\lambda$ and the algebra $VN_r(G)$ if $\va=r$.
From the definition of the right and left action of
$VN(G)$ on $A(G)$ it follows that
 \begin{equation}\label{leri}
 T \cdot \ga_{f,g}=\ga_{T (f),g},\ \ga_{f,g}\cdot T=\ga_{f,T^*(g)},\ f,g\in
 L^2(G),
 \end{equation}

 since for $T\in VN(G)$ and  $u=\ga_{f,g} \in A(G)$,
 $ \ f,g\in L^2(G)$,
 \begin{eqnarray}
 \nonumber \sp S {T\cdot  u} &=& \langle ST,u\rangle =\langle ST(f), g\rangle \\
 \nonumber  &=& \langle S(T(f)), g\rangle =\sp S{\ga_{T(f),g}},\\
  \nonumber\sp S {u\cdot T } &=& \sp{T S}u=\langle T S(f), g\rangle \\
 \nonumber  &=& \langle S(f), T^*(g)\rangle =\sp S{\ga_{f,T^*(g)}}.
 \end{eqnarray}
 %It is also easy to check that
 %\begin{equation}\label{corl}
  %\ga^r_{f,g} T=\ga^\la_{V(f),V(g)}\Phi(T),\ T\in VN_r(G),f,g\in L^2(G).
  %\end{equation}
Let also
\begin{equation*}
 \check u(t)=u(t\inv ),\  u\in A(G), t\in
 G.
 \end{equation*}
 We define an antilinear map $T\mapsto \ol T$ of $VN(G)$ by
 \begin{equation*}
 \ol T (f):= \ol{T(\ol f)},\ f\in L^2(G).
 \end{equation*}
and a linear  involution $T\mapsto \check T$ on $VN(G)$ by
 \begin{equation*}
 \sp {\check T} u:=\sp T{\check u},\ T\in VN(G),\ u\in A(G).
 \end{equation*}
 For $u=\ga_{f,g}$  we have
 \begin{eqnarray}
 \nonumber \check u(t)&=& u(t\inv)
  = \langle \la(t\inv) f, g\rangle
  =\ol {\langle \la(t) g, f\rangle}\\
  \nonumber &=&\langle \la(t)\ol g,\ol f\rangle =\ga_{\ol g, \ol f}(t),
 \end{eqnarray}
that is  \begin{equation}\label{check}
 \check\ga_{f,g}=\ga_{\ol g,\ol f},\ f,g\in L^2(G).
 \end{equation}
   
 Similarly
 \begin{equation}\label{tilde}
  \tilde\ga_{f,g}=\ga_{g,f},\ f,g\in L^2(G).
 \end{equation} 

Hence, for $u=\ga_{f,g}\in A(G)$ we see that
\begin{eqnarray}
 \nonumber \sp{\check T} u = \sp{T}{\check u}
   &=& \sp{T}{ 
    \ga_{\ol g, \ol f}
              }=\langle T(\ol g),\ol f\rangle\\
    \nonumber  &=&\langle \ol g,T^*(\ol f)\rangle=
    \langle\ol g,
    \ol{({\ol{T}^*(
    f)}}
\rangle=\langle\ol T^*(f),g\rangle,
\end{eqnarray}
whence  
 \begin{equation}\label{tista}
  \check T=\ol T^*,\ T\in VN(G).
  \end{equation}\

We have also the following identities
\begin{equation}\label{inversion}
 (u \cdot T)\tilde{}= T^*\cdot \tilde u,\quad (T \cdot u)\tilde{}=\tilde u\cdot T^*,
 \ u\in A(G),\ T\in VN(G).
 \end{equation}

Indeed, for  $u=\ga_{f,g}\in A(G)$, we have by  
(\ref{leri}) 
and (\ref{tilde})
$$(u\cdot  T)\tilde{}=\tilde\ga_{f,T^*(g)}=\ga_{T^*(g),f}=T^*\cdot \ga_{g,f}=T^*\cdot \tilde u.$$
 %\begin{eqnarray}\nonumber \sp S{(u T)\tilde{}} &=& \sp{ \tilde S}{u T}
 %= \sp{ (T \tilde S\ } u\\
  %\nonumber  &=& \sp{ (T\tilde S} {\ga^\va_{f,g}}=\sp{ \tilde S} 
%{\ga^\va_{f,T^*(g)}}=\sp{ S} {\ga^\va_{T^*(g),f}}
 % \\
%\nonumber &=& \sp {S((T^*(g))}f=\sp S{T^* \tilde u}.
%\end{eqnarray}
The other equality is obtained in a similar way.                        

In \cite{E} Eymard considered another action
$(T,u)\mapsto T\circ u$ of
 $VN(G)$ on $A(G)$ which is defined through the following formula:
 \begin{equation*}
 \sp{S}{T\circ u}:= \sp {\check T S} u,\ u\in A(G).
 \end{equation*}
 It follows from the relations above that
\begin{equation}\label{eyac}
 T\circ (\ga_{f,g})=\ga_{f,\ol T(g)}.
  \end{equation} 
  Indeed, for $u=\ga_{f,g}\in A(G)$ we get
 \begin{eqnarray}
 \nonumber \sp{S}{T\circ u} &=& \sp{\check  T S}u
= \langle\check T S (f), g\rangle \\
 \nonumber &=&\langle S(f),\ol T (g)\rangle=\sp S{\ga_{f,\ol T(g)}}
 \end{eqnarray}

 Hence, by (\ref{leri}) and (\ref{tista})
 \begin{eqnarray}\label{leey}
T\circ u=u\cdot  \check T, \ u\in A(G),\ T\in VN(G).
 \end{eqnarray}
 In particular,  it follows from \cite[Proposition 3.17]{E} that for $u\in 
A(G)\cap L^2(G)$ and $T\in VN(G)$, $T(u)\in A(G)\cap L^2(G)$,  and
 \begin{equation}\label{l2ag}
 T(u)=T\circ u=u \cdot \check T.
 \end{equation}

{\bf Acknowledgments.}The work was partially written when the first author was visiting
Chalmers University of Technology in G\"oteborg, Sweden and when
the second author was a visiting professor at Universit\'e de Lorraine, 
France, whose hospitality is highly acknowledged.   The authors would like to thank Victor Shulman for 
valuable  discussions and remarks.


\begin{thebibliography}{99}

%\bibitem[BF]{bozejko-fendler} M.Bozejko and G.Fendler,  Herz-Schur 
%multipliers and uniformly bounded representations of discrete groups. Arch. 
%Math. (Basel) 57 (1991), no. 3, 290–298. 
\bibitem[CaH]{can_haag} J. de Canniere and U. Haagerup,
Multilpliers of the Fourier algebras of some simple Lie groups and
their discrete subgroups. Amer. J. Math., {\bf 107} (1984),
455-500.
\bibitem[CoH]{coh} M. Cowling and U. Haagerup, Completely
bounded multipliers of the Fourier algebra of a simple Lie group
of real rank 1. Invent. Math., {\bf 96} (1989), 507-549.
\bibitem[Di1]{Di1}J. Dixmier,  Les alg\`ebres d'op\'erateurs dans l'espace 
hilbertien (alg\`ebres de von Neumann).  Deuxi\`eme édition, revue et 
augment\'ee. Cahiers Scientifiques, Fasc. XXV. Gauthier-Villars Éditeur, Paris, 
1969. x+367 pp.
\bibitem[Di2]{Di2} J. Dixmier, Les $C^ *$-alg\`ebres et leurs
repr\' esentations. 2e ed.  Cahiers scientifiques. 29. Paris:
Gauthier-Villars. XV, 390 p. 1969.
\bibitem[E]{E} P. Eymard, L'alg\`ebre de Fourier d'un groupe localement
compact. (French)  Bull. Soc. Math. France  {\bf 92}  (1964) 181--236.
\bibitem[F]{folland} G.B. Folland, Real analysis. Modern techniques and their 
applications. Second edition. Pure and Applied Mathematics (New York). A 
Wiley-Interscience Publication. John Wiley \& Sons, Inc., New York, 1999. 
xvi+386 pp. 
\bibitem [H1] {H1} C. Herz,
The spectral theory of bounded functions.
Trans. Amer. Math. Soc. {\bf 94} (1960) 181–232.
\bibitem[H2]{H2} C. Herz,  Harmonic synthesis for subgroups.
  Ann. Inst. Fourier (Grenoble)  {\bf 23}  (1973), no. 3, 91-123.
\bibitem[HR]{hewitt-ross} E. Hewitt and K.~A. Ross, {\em Abstract Harmonic 
Analysis II},
volume 152 of {\em Grundlehren
  der mathemarischen Wissenschaften}.
Springer, New York, 1970.
\bibitem[GM]{graham} C.C. Graham, O.C. McGehee,  Essays in commutative harmonic 
analysis. Grundlehren der Mathematischen Wissenschaften [Fundamental Principles 
of Mathematical Science], 238. Springer-Verlag, New York-Berlin, 1979. xxi+464 
pp. 
\bibitem[KL]{kaniuth-lau} E. Kaniuth, A.T.M. Lau, Fourier and Fourier-Stieltjes 
algebras on locally compact groups. Mathematical Surveys and Monographs, 231. 
American Mathematical Society, Providence, RI, 2018. xi+306 pp.
%\bibitem[LT]{lt}  J. Ludwig, L. Turowska,  On the connection between sets of 
%operator synthesis and sets of spectral synthesis for locally compact groups. 
%J. Funct. Anal. 233 (2006), no. 1, 206–227. 
%\bibitem[P]{peller} V. Peller, Hankel operators in the theory of perturbations 
%of unitary and selfadjoint operators
%V.V. Peller
%Funct. Anal. Appl., 19 (2) (1985), pp. 111-123.
 \bibitem[Re]{Re} H. Reiter, Contributions to Harmonic Analysis: VI,
Annals of Mathematics, Second Series, Vol. 77, No. 3 (May, 1963),  552-562.
%\bibitem[Re]{ReB} H. Reiter, Classical Harmonic Analysis and Locally Compact 
%groups, Oxford Mathematical Monographs, at the Clarendon Press, 1968.
%\bibitem[Sh]{sh} V.S. Shulman,  Lattices of projectors in a Hilbert space. 
%(Russian) Funktsional. Anal. i Prilozhen. 23 (1989), no. 2, 86–87; translation 
%in Funct. Anal. Appl. 23 (1989), no. 2, 158–159.
%\bibitem[ShTT]{stt} V. Shulman, I.G. Todorov, L. Turowska, Sets of multiplicity 
%and closable multipliers on group algebras. J. Funct. Anal. 268 (2015), no. 6, 
%1454–1508.
%\bibitem[ShT]{sht1} V. Shulman, L. Turowska, Operator synthesis. I. Synthetic 
%sets, bilattices and tensor algebras. J. Funct. Anal. 209 (2004), no. 2, 
%293–331.
\bibitem[T]{ta}
M. Takesaki,  Theory of operator algebras I. Springer-Verlag, 1979.
\bibitem[TT]{takesaki-tatsuuma}
M. Takesaki, N. Tatsuuma, Duality and subgroups. II, J. Funct. Anal. 11 (1972), 
184-190. 
\bibitem[W]{warner} C.R. Warner, A class of spectral sets, Proc. AMS  57 
(1976), no. 1, 99-102.
%\bibitem[W]{warner}
\end{thebibliography}
 \end{document}